\documentclass[12pt,oneside]{article}

\usepackage{quotes}
\usepackage{amssymb}
\usepackage{amsmath}
\usepackage{tikz}
\usetikzlibrary{shapes,arrows,positioning}
\usepackage[letterpaper,left=1in,right=1in, bottom=1in, top=1in]{geometry}
\usepackage{titlesec}
\usepackage{setspace}
\usepackage{textcomp}
\usepackage{amsthm}
\usepackage{eso-pic}
\usepackage{graphicx}
\usepackage{color}
\usepackage{longtable}
\usepackage{transparent}
\usepackage{caption}
\usepackage{hyperref}

\renewcommand{\geq}{\geqslant}
\renewcommand{\leq}{\leqslant}

\newcommand{\tab}{\hspace{1cm}}

\newcommand{\ind}{\text{\normalfont{ind}}}

\titleformat{\section}{\bfseries\centering}{\thesection \hspace{0.5cm}}{12pt}{}

\titleformat{\subsection}{\bfseries}{\thesubsection \hspace{0.5cm}}{12pt}{}

\newtheorem{theorem}{Theorem}[section]

\newtheorem{corollary}{Corollary}[section]
\newtheorem{remark}{Remark}[section]
\newtheorem{definition}{Definition}[section]

\newtheorem{property}{Property}[section]

\begin{document}\setlength{\parindent}{0cm}\thispagestyle{empty}
	\begin{center}
		\Large \textbf{On Arithmetic Cordial Labeling of Some Graphs}
	\end{center}
	\vspace{2mm}
	Jason D. Andoyo$^1$, Jemina Clarisse C. Prudencio$^1$, and Ricky F. Rulete$^{1}$
	
	$^1$ Department of Mathematics and Statistics, University of Southeastern Philippines, Davao City, Philippines
	
	\section*{Abstract}\small
	\tab Let $\eta$ be a fixed positive integer. Let $S$ be a subset of $\mathbb{Z}$, $\star:S\times S\to \mathbb{Z}$ be a binary function, and $\zeta_{\eta}:\{\xi\in \mathbb{Z}:\gcd(\xi,\eta)=1\}\to \{0,1\}$ be a function. For a simple connected graph $G$ of order $n$, a bijective function $f:V(G)\to S$ (where $|S|=n$) is called an arithmetic cordial labeling modulo $\eta$ under $\langle S,\zeta_\eta,\star\rangle$ if the induced function $f_\eta^*:E(G)\to \{0,1\}$, defined by $f_\eta^*(uv)=0$ whenever $\zeta_\eta(f(a)\star f(b))=0$ or $\gcd(f(a)\star f(b),\eta)\neq 1$, and $f_\eta^*(uv)=1$ whenever $\zeta_\eta(f(a)\star f(b))=1$, satisfies the condition $|e_{f_\eta^*}(0)-e_{f_\eta^*}(1)|\leq 1$, where $e_{f_\eta^*}(i)$ is the number of edges with label $i$ ($i=0,1$). In this paper, we explore the arithmetic cordial labeling of some graphs under conditions imposed on the function $\zeta_\eta$. The graphs included are star graphs, ladder graphs, alternate cycle snake graphs, join graphs, corona graphs, and tensor product graphs.
	\normalsize
	\vspace{0.3cm}
	
	\textbf{Keywords:} binary function; graph; arithmetic cordial labeling
	
	\textbf{MSC 2020:} 05C76, 05C78, 11A05, 11A07
	\section{Introduction}
	
		\tab A simple graph $G=(V, E)$ is an ordered pair, where $V=V(G)$ is called the \textit{vertex set} and $E=E(G)$ is called the \textit{edge set}. The elements of $E$ are unordered pairs of distinct elements of $V$. The elements of $V$ and $E$ are called \textit{vertices} and \textit{edges}, respectively. If $G$ has $n$ vertices and $m$ edges, then $G$ has \textit{order} $n$ and \textit{size} $m$. Let $v\in V(G)$. Then the \textit{degree} of $v$, denoted by $\deg(v)$, is the number of vertices adjacent to $v$. If $\deg(v)=1$, then $v$ is called a \textit{pendant vertex} of $G$.
		
		\tab Graph labeling is a well-known concept in graph theory that studies the properties of assigning integers to the vertices and/or edges of a graph under specific conditions \cite{Gallian}. In 1987, I. Cahit \cite{Cahit} introduced the concept of \textit{cordial labeling}. This graph labeling assigns integers $0$ and $1$ to the vertices of a graph, where the label of each edge is the absolute difference of the labels of its endpoints. If the numbers of vertices labeled $0$ and $1$ differ by at most $1$, and the numbers of edges labeled $0$ and $1$ also differ by at most $1$, then the graph is said to admit a cordial labeling. This concept inspired the introduction of several variants of cordial labeling, including Legendre cordial labeling \cite{Andoyo1}, Euler cordial labeling \cite{Andoyo2}, Legendre product cordial labeling \cite{Prudencio}, logarithmic cordial labeling \cite{Andoyo3}, and $(a,b)$-Fibonacci-Legendre cordial labeling \cite{Andoyo4}. These variants use different concepts from number theory, such as the Legendre symbol, Euler's Theorem, discrete logarithms (indices), and the $(a,b)$-Fibonacci sequence. In this paper, we define a much more general concept that encompasses the above-mentioned variants of cordial labeling, which we call \textit{arithmetic cordial labeling}. 
		\newpage
		
		\tab Let $\eta$ be a fixed positive integer. In addition, let $S$ be a subset of $\mathbb{Z}$,
		$\star:S\times S\to \mathbb{Z}$ be a binary function, and $\zeta_\eta:\{\xi\in \mathbb{Z}:\gcd(\xi,\eta)=1\}\to \{0,1\}$ be a function. We call the triple $\langle S,\zeta_\eta,\star\rangle$ an \textit{arithmetic structure}. For a simple connected graph $G$ of order $n$, a bijective function $f : V(G) \to S$ (where $|S|=n$) is called an \textit{arithmetic cordial labeling modulo $\eta$ under $\langle S,\zeta_\eta,\star\rangle$} if the induced function $f_\eta^* : E(G) \to \{0,1\}$, defined by
		\begin{equation*}
			f_\eta^*(ab)=
			\begin{cases}
				0 & \text{if } \zeta_\eta(f(a)\star f(b))=0 \text{ or } \gcd(f(a)\star f(b),\eta)\neq 1\\
				1 & \text{if } \zeta_\eta(f(a)\star f(b))=1,
			\end{cases}
		\end{equation*}
		satisfies the condition $|e_{f_\eta^*}(0)-e_{f_\eta^*}(1)|\leq 1$, where $e_{f_\eta^*}(i)$ denotes the number of edges labeled $i$ $(i=0,1)$. A graph that admits this labeling is called an \textit{arithmetic cordial graph modulo $\eta$ under $\langle S,\zeta_\eta,\star\rangle$}.
		
		\tab Let $G$ be a simple connected graph of order $n$. Suppose that $(a/p)$ is the Legendre symbol of $a$ over an odd prime $p$ (see Definition 2.8 in \cite{Andoyo1}), $\ind_{\varpi,\eta}(a)$ is the discrete logarithm of $a$ to the base $\varpi$ modulo $\eta$ (where $\varpi$ is a fixed primitive root of $\eta$; see Definition 2.4 in \cite{Andoyo3}), and $F_i$ is the $i$th $(a,b)$-Fibonacci number (see \cite{Andoyo4}). Also, consider the expression $\frac{a^{\phi(\eta)}-1}{\eta}$ (from Euler's Theorem; see Theorem 2.12 in \cite{Andoyo2}), where $\eta$ is an odd positive integer with $\eta\geq 3$ and $\phi(\eta)$ is the Euler phi-function. Thus, the following are arithmetic cordial labeling with their corresponding arithmetic structures $\langle S,\zeta_\eta,\star\rangle$.
		
		\begin{enumerate}
			\item[$\bullet$] Legendre cordial labeling modulo $p$:
			$$S=\{1,2,\ldots,n\};\quad\zeta_p(a)=\frac{1+(a/p)}{2};\quad x\star y=x+y$$
			\item[$\bullet$] Euler cordial labeling modulo $\eta$:
			$$S=\{1,2,\ldots,n\},\quad\zeta_\eta(a)=\frac{a^{\phi(\eta)}-1}{\eta},\quad x\star y=x+y$$
			\item[$\bullet$] Legendre product cordial labeling modulo $p$:
			$$S=\{1,2,\ldots,n\};\quad\zeta_p(a)=\frac{1+(a/p)}{2};\quad x\star y=xy$$
			\item[$\bullet$] Logarithmic cordial labeling modulo $\eta$:
			$$S=\{1,2,\ldots,n\};\quad\ind_{\varpi,\eta}(a)\equiv \zeta_\eta(a)\pmod{2};\quad x\star y=x+y$$
			\item[$\bullet$] $(a,b)$-Fibonacci--Legendre cordial labeling modulo $p$:
			$$S=\{0,1,\ldots,n-1\};\quad\zeta_p(a)=\frac{1+(a/p)}{2};\quad x\star y=F_x+F_y$$
		\end{enumerate}
		
		\tab Now, suppose that the following properties hold for $\zeta_p$ where $p$ is an odd prime:
		
		\begin{property}\normalfont\label{prop1}
			Let $\gcd(\theta_1,p)=\gcd(\theta_2,p)=1$. If $\theta_1\equiv \theta_2\pmod{p}$, then $\zeta_p(\theta_1)=\zeta_p(\theta_2)$.
		\end{property}
		
		\begin{property}\normalfont\label{prop2}
			If $A_i=\{a:\zeta_p(a)=i,\ 1\leq a\leq p-1\}$ for $i=0,1$, then $|A_0|=|A_1|$. Hence, $|A_0|=|A_1|=\frac{p-1}{2}$.
		\end{property}
		
		\begin{property}\normalfont\label{prop3}
			Let $\gcd(\theta,p)=1$. Assume that $\chi_p:\{\xi\in\mathbb{Z}:\gcd(\xi,p)=1\}\to\{-1,1\}$ is a function (e.g., the Legendre symbol) and suppose that
			$$\zeta_p(\theta)=\frac{1+\chi_p(\theta)}{2}.$$
			Thus, $\chi_p(\theta_1\theta_2)=\chi_p(\theta_1)\chi_p(\theta_2)$.
		\end{property}
		
		\tab For Legendre cordial labeling and Legendre product cordial labeling, observe that if 
		\begin{equation}
			\zeta_p^1(a)=\frac{1+\chi_p(a)}{2}\label{eq0}
		\end{equation}
		where $\chi_p(a)=(a/p)$, then Properties~\ref{prop1}, \ref{prop2}, and \ref{prop3} hold for $\zeta_p^1$ (see Theorem 2.9 in \cite{Andoyo2} and Theorem 2.10 in \cite{Prudencio}). Additionally, for logarithmic cordial labeling, notice that if 
		\begin{equation}
			\ind_{\varpi_1,p}(a)\equiv \zeta_p^2(a)\pmod{2}\label{eq1}
		\end{equation}
		where $\varpi_1$ is a fixed primitive root of $p$, then $\zeta_p^2$ satisfies Properties~\ref{prop1} and \ref{prop2} (see Remarks 2 and 3 in \cite{Andoyo3} with $\eta=p$). Furthermore, let 
		\begin{equation}
			\zeta_p^3(a)=\frac{1+\chi_p(a)}{2}\label{eq2}
		\end{equation}
		with $\chi_p(a)=(-1)^{\ind_{\varpi_2,p}(a)}$, where $\varpi_2$ is a fixed primitive root of $p$. If $a_1\equiv a_2\pmod{p}$ with $\gcd(a_1,p)=\gcd(a_2,p)=1$, then by Remark 2 in \cite{Andoyo3}, we have $\ind_{\varpi_2,p}(a_1)=\ind_{\varpi_2,p}(a_2)$, which means $(-1)^{\ind_{\varpi_2,p}(a_1)}=(-1)^{\ind_{\varpi_2,p}(a_2)}$. Obviously, $\zeta_p^3(a_1)=\zeta_p^3(a_2)$, and so Property~\ref{prop1} holds. Now, assume that 
		$$A_i^2=\{a:\zeta_p^2(a)=i,\ 1\leq a\leq p-1\},\text{ and }$$
		$$A_i^3=\{a:\zeta_p^3(a)=i,\ 1\leq a\leq p-1\},$$
		for $i=0,1$ with $\varpi_1=\varpi_2$, where $\zeta_p^2$ is defined in (\ref{eq1}). If $a\in A_0^2$, then $\ind_{\varpi_2,p}(a)\equiv 0\pmod{2}$, and so $\chi_p(a)=1$. Hence, $a\in A_0^3$. Conversely, if $a\in A_0^3$, then 
		\begin{align*}
			1&=\frac{1+\chi_p(a)}{2}\tag{\text{by (\ref{eq2})}}\\
			2&=1+\chi_p(a)\\
			\chi_p(a)&=1\\
			(-1)^{\ind_{\varpi_2,p}(a)}&=1.
		\end{align*}
		Clearly, $\ind_{\varpi_2,p}(a)$ is even. Therefore, $a\in A_0^2$. Thus, $A_i^2=A_i^3$ for $i=0,1$, and it follows that $\zeta_p^3$ satisfies Property~\ref{prop2} since Property~\ref{prop2} holds for $\zeta_p^2$. Lastly, assume that $\gcd(a_1,p)=\gcd(a_2,p)=1$. Using Theorem 9.16 (ii) in \cite{Rosen} (page~369) with $m=p$ and $r=\varpi_2$, 
		$$\ind_{\varpi_2,p}(a_1a_2)\equiv \ind_{\varpi_2,p}(a_1)+\ind_{\varpi_2,p}(a_2)\pmod{(p-1)}.$$
		Since $p-1$ is even, we have 
		$$\ind_{\varpi_2,p}(a_1a_2)\equiv \ind_{\varpi_2,p}(a_1)+\ind_{\varpi_2,p}(a_2)\pmod{2},$$
		which means that $\ind_{\varpi_2,p}(a_1a_2)$ and $\ind_{\varpi_2,p}(a_1)+\ind_{\varpi_2,p}(a_2)$ have the same parity. Consequently,
		\begin{align*}
			\chi_p(a_1a_2)&=(-1)^{\ind_{\varpi_2,p}(a_1a_2)}\\
			&=(-1)^{\ind_{\varpi_2,p}(a_1)+\ind_{\varpi_2,p}(a_2)}\\
			&=(-1)^{\ind_{\varpi_2,p}(a_1)}(-1)^{\ind_{\varpi_2,p}(a_2)}\\
			&=\chi_p(a_1)\chi_p(a_2).
		\end{align*}
		Hence, Property~\ref{prop3} holds. Additionally, the function 
		$$\zeta_p^4(a)=\frac{1-\chi(a)}{2},$$
		where $\chi_p(a)=(a/p)$, also satisfies Properties~\ref{prop1}, \ref{prop2}, and \ref{prop3} with the same explanation as $\zeta_p^1$ defined in (\ref{eq0}).
		
		\tab As we observed, several functions satisfy Properties~\ref{prop1}, \ref{prop2}, and/or \ref{prop3}. Motivated by this, this paper explores the arithmetic cordial labeling of some graphs. In Sections~\ref{sec3} and \ref{sec4}, we assume that $\eta=p$ is an odd prime.
		
		\section{Basic Concepts}
		
		\begin{definition}\normalfont
			A \textit{path graph} $P_n$ of order $n$ is obtained from vertices $v_1,v_2,\ldots,v_n$ such that $v_i$ is adjacent to $v_{i+1}$ for $i=1,2,\ldots,n-1$. Also, a \textit{cycle graph} $C_n$ of order $n$ is created from a path graph $P_n$ such that two pendant vertices of $P_n$ are adjacent.
		\end{definition}
		
		\begin{definition}\normalfont
			A \textit{star graph} $\text{Star}_n$ of order $n$ is obtained from vertices $x_0,v_1,v_2,\ldots,v_{n-1}$ such that $x_0$ is adjacent to $v_i$ for $i=1,2,\ldots,n-1$. The vertex $x_0$ is called the \textit{central vertex} of $\text{Star}_n$.
		\end{definition}
		
		\begin{definition}\normalfont
			A \textit{complete graph} $K_n$ of order $n$ is a graph where two distinct vertices are adjacent. In addition, an \textit{empty graph} $\overline{K}_n$ of order $n$ is a graph with no edges.
		\end{definition}
		
		\begin{definition}\normalfont
			A \textit{ladder graph} $L_n$ of order $2n$ is obtained from two path graphs $P_n^1$ and $P_n^2$ such that the $i$th vertex of $P_n^1$ is adjacent to the $i$th vertex of $P_n^2$.
		\end{definition}
		
		\begin{definition}\normalfont
			A \textit{kayak paddle graph} $KP_{n,m,k}$ of order $n+m+k$ is created from two cycle graphs $C_n$ and $C_k$, and path graph $P_{m+2}$, for which a pendant vertex of $P_{m+2}$ is a vertex of $C_n$ and the other pendant vertex is a vertex of $C_k$. 
		\end{definition}
		
		\begin{definition}\normalfont
			An \textit{alternate cycle snake graph} $A_n(C_m)$ of order $nm$ is obtained from cycle graphs $C_m^1,C_m^2,\ldots,C_m^n$, with $V(C_m^i)=\{v_1^i,v_2^i,\ldots,v_n^i\}$ for $i=1,2,\ldots,n$, for which $v_m^j$ is adjacent to $v_1^{j+1}$ for $j=1,2,\ldots,n-1$.
		\end{definition}
		
		\begin{definition}\normalfont
			A graph $G$ is called a \textit{bipartite graph} if its vertex set $V(G)$ can be partitioned into two nonempty subsets $V_1$ and $V_2$ such that the edges of $G$ have one end in $V_1$ and one end in $V_2$. The sets $V_1$ and $V_2$ are called \textit{partite sets} of $G$.
		\end{definition}
		
		\begin{definition}\normalfont
			Let $G$ and $H$ be graphs. 
			\begin{enumerate}
				\item[i.] A \textit{join graph} $G+H$ is a graph with vertex set $V(G+H)=V(G)\cup V(H)$ and edge set $E(G+H)=E(G)\cup E(H)\cup \{ab:a\in V(G)\text{ and }b\in V(H)\}$.
				\item[ii.] A \textit{corona graph} $G\circ H$ is obtained from $|V(G)|$ copies of $H$ and one copy of $G$ such that every vertex of the $i$th copy of $H$ is adjacent to the $i$th vertex of $G$.
				\item[iii.] A \textit{tensor product graph} $G\times H$ is a graph with vertex set $V(G\times H)=V(G)\times V(H)$ and edge set $E(G\times H)=\{(a_1,b_1)(a_2,b_2):a_1a_2\in E(G)\text{ and }b_1b_2\in E(H)\}$.
			\end{enumerate}
		\end{definition}

		\begin{definition}\normalfont\label{equivalent}
			An arithmetic structure $\langle S_1,\zeta_{\eta_1}^1,\star_1\rangle$ is  \textit{equivalent} to another arithmetic structure $\langle S_2,\zeta_{\eta_2}^2,\star_2\rangle$, denoted by $\langle S_1,\zeta_{\eta_1}^1,\star_1\rangle\cong \langle S_2,\zeta_{\eta_2}^2,\star_2\rangle$, if there exists a bijective function $\psi:S_1\to S_2$ such that for all $a,b\in S_1$, $a\neq b$, 
			\begin{enumerate}
				\item[i.] $\gcd(a\star_1b,\eta_1)=1$ if and only if $\gcd(\psi(a)\star_2\psi(b),\eta_2)=1$, and
				\item[ii.] $\zeta_{\eta_1}^1(a\star_1 b)=\zeta_{\eta_2}^2(\psi(a)\star_2 \psi(b))$  whenever $\gcd(a\star_1 b,\eta_1)=1$.
			\end{enumerate}
		\end{definition}
		
		\section{General Results}
		\begin{theorem}\normalfont\label{thm3.1}
			Let $G$ be a simple connected graph and let $\eta_1$ and $\eta_2$ be fixed integers. Assume that $\langle S_1,\zeta_{\eta_1}^1,\star_1\rangle\cong \langle S_2,\zeta_{\eta_2}^2,\star_2\rangle$. Then $G$ is an arithmetic cordial graph modulo $\eta_1$ under $\langle S_1,\zeta_{\eta_1}^1,\star_1\rangle$ if and only if it is an arithmetic cordial graph modulo $\eta_2$ under $\langle S_2,\zeta_{\eta_2}^2,\star_2\rangle$.	
		\end{theorem}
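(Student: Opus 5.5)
The plan is to transport a labeling along the bijection $\psi:S_1\to S_2$ from Definition~\ref{equivalent} and check that every edge keeps its label. Suppose $f:V(G)\to S_1$ is an arithmetic cordial labeling modulo $\eta_1$ under $\langle S_1,\zeta_{\eta_1}^1,\star_1\rangle$. Define $g=\psi\circ f:V(G)\to S_2$. It is a bijection, being a composition of bijections, and $|S_2|=|S_1|=n$. The key claim is that $g_{\eta_2}^*(ab)=f_{\eta_1}^*(ab)$ for every edge $ab\in E(G)$.

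To prove the claim, fix an edge $ab$. Since $G$ is simple, $a\neq b$, so $f(a)\neq f(b)$ by injectivity; this is exactly what is needed to apply conditions i and ii of Definition~\ref{equivalent} to $f(a),f(b)\in S_1$. I would then split into two cases.
\begin{enumerate}
\item[Case 1.] $\gcd(f(a)\star_1 f(b),\eta_1)\neq 1$. Then $f_{\eta_1}^*(ab)=0$. By condition i, $\gcd(g(a)\star_2 g(b),\eta_2)\neq 1$, so $g_{\eta_2}^*(ab)=0$ as well.
\item[Case 2.] $\gcd(f(a)\star_1 f(b),\eta_1)=1$. Then $f_{\eta_1}^*(ab)=\zeta^1_{\eta_1}(f(a)\star_1 f(b))$. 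Condition i gives $\gcd(g(a)\star_2 g(b),\eta_2)=1$, so $g_{\eta_2}^*(ab)=\zeta^2_{\eta_2}(g(a)\star_2 g(b))$. Condition ii says these two $\zeta$ values coincide.
\end{enumerate}
In both cases the induced edge label is unchanged. Hence $e_{g^*_{\eta_2}}(i)=e_{f^*_{\eta_1}}(i)$ for $i=0,1$, and the cordiality condition $|e(0)-e(1)|\leq 1$ carries over from $f$ to $g$.

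For the converse, I would use $\psi^{-1}$ and set $f=\psi^{-1}\circ g$. This requires checking that $\psi^{-1}$ satisfies conditions i and ii in the reverse direction. Condition i is an ``if and only if,'' so it is symmetric. For condition ii, take distinct $c,d\in S_2$ with $\gcd(c\star_2 d,\eta_2)=1$ and write $c=\psi(a)$, $d=\psi(b)$. Then $a\neq b$, and condition i gives $\gcd(a\star_1 b,\eta_1)=1$. Applying condition ii to $a,b$ yields the equality of the $\zeta$ values. So the relation $\cong$ is symmetric, and the first direction applies unchanged.

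The main obstacle is a well-definedness subtlety rather than any combinatorial difficulty. The definition of $f^*$ implicitly reads the label as $0$ when the gcd is not $1$, since $\zeta$ is only defined on integers coprime to $\eta$. The case split above handles this. I would also note explicitly that distinctness of the endpoint labels is guaranteed, because the definition of equivalence only constrains pairs $a\neq b$.
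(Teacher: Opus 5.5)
Your proof is correct and follows the same route as the paper. Both transport the labeling via $g=\psi\circ f$, check edge by edge that $g_{\eta_2}^*(ab)=f_{\eta_1}^*(ab)$ using conditions (i) and (ii) of the equivalence definition, and handle the converse with $\psi^{-1}$. Your version is also more careful at three points the paper glosses over: the case split on the gcd, the observation that $f(a)\neq f(b)$, and the explicit check that $\cong$ is symmetric (the paper applies (ii) as if it held for all $x,y\in S_1$ and simply calls the converse analogous).
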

		
		\begin{proof}
			Suppose that $G$ is an arithmetic cordial graph modulo $\eta_1$ under $\langle S_1,\zeta_{\eta_1}^1,\star_1\rangle$. So, there is a bijective function $f:V(G)\to S_1$ such that the induced edge label
			$$f_{\eta_1}^*(ab)=\begin{cases}
				0&\text{ if $\zeta_{\eta_1}^1(f(a)\star_1f(b))=0$ or $\gcd(f(a)\star_1 f(b),\eta_1)\neq 1$}\\
				1&\text{ if $\zeta_{\eta_1}^1(f(a)\star_1f(b))=1$}
			\end{cases}$$
			satisfies the condition $|e_{f_{\eta_1}^*}(0)-e_{f_{\eta_1}^*}(1)|\leq 1$. Since $\langle S_1,\zeta_{\eta_1}^1,\star_1\rangle\cong \langle S_2,\zeta_{\eta_2}^2,\star_2\rangle$, there exists a bijective function $\psi: S_1\to S_2$ such that for all $x,y\in S_1$, $\zeta_{\eta_1}^1(x\star_1y)=\zeta_{\eta_2}^2(\psi(x)\star_2\psi(y))$. Define a function $g:V(G)\to S_2$ by
			$$g(v)=\psi(f(v))$$
			for all $v\in V(G)$. Obviously, $g$ is a bijective function. Observe that the induced edge label for $g$ is
			$$g_{\eta_2}^*(ab)=\begin{cases}
				0&\text{ if $\zeta_{\eta_2}^2(g(a)\star_2g(b))=0$ or $\gcd(g(a)\star_2 g(b),\eta_2)\neq 1$}\\
				1&\text{ if $\zeta_{\eta_2}^2(g(a)\star_2g(b))=1$}
			\end{cases}.$$
			Because $g(a)\star_2 g(b)=\psi(f(a))\star_2 \psi(f(b))$, and $\zeta_{\eta_1}^1(x\star_1y)=\zeta_{\eta_2}^2(\psi(x)\star_2\psi(y))$, we have
			$$\zeta_{\eta_2}^2(g(a)\star_2g(b))=\zeta_{\eta_2}^2(\psi(f(a))\star_2 \psi(f(b)))=\zeta_{\eta_1}^1(f(a)\star_1f(b)).$$
			In addition, notice that $\gcd(x\star_1y,\eta_1)=1$ if and only if $\gcd(\psi(x)\star_2\psi(y),\eta_2)=1$ because $\langle S_1,\zeta_{\eta_1},\star_1\rangle\cong \langle S_2,\zeta_{\eta_2},\star_2\rangle$. So, $\gcd(f(a)\star_1f(b),\eta_1)\neq 1$ if and only if $\gcd(g(a)\star_2g(b),\eta_2)\neq 1$ since $g(v)=\psi(f(v))$. Therefore, $g_{\eta_2}^*(ab)=f_{\eta_1}^*(ab)$ for all $ab\in E(G)$. Hence, $e_{f_{\eta_1}^*}(i)=e_{g_{\eta_2}^*}(i)$ for $i=0,1$, and it follows that $|e_{g_{\eta_2}^*}(0)-e_{g_{\eta_2}^*}(1)|\leq 1$. Hence, $G$ is an arithmetic cordial graph modulo $\eta_2$ under $\langle S_2,\zeta_{\eta_2}^2,\star_2\rangle$. 
			
			\tab For the converse, by using the inverse of $\psi$, the proof is analogous. 
		\end{proof}

		\tab Let $\eta\geq 3$ be a positive integer and let $\varphi(\eta)=\{\xi\in \mathbb{Z}:\gcd(\xi,\eta)=1,\text{ }1\leq \xi< \eta\}$. In addition, suppose that $\phi(\eta)$ is the Euler phi-function of $\eta$. Clearly, $|\varphi(\eta)|=\phi(\eta)$. Consider the following properties:
		\begin{property}\normalfont(General version of Property~\ref{prop1})\label{prop4}
			Assume that $\gcd(\theta_1,\eta)=\gcd(\theta_2,\eta)=1$. If $\theta_1\equiv \theta_2\pmod{\eta}$, then $\zeta_\eta(\theta_1)=\zeta_\eta(\theta_2)$.
		\end{property}
		\begin{property}\normalfont(General version of Property~\ref{prop2})\label{prop5}
			If $A_i=\{a:\zeta_\eta(a)=i,\text{ }a\in \varphi(\eta)\}$ for $i=0,1$, then $|A_0|=|A_1|$. So, $|A_0|=|A_1|=\frac{\phi(\eta)}{2}$.
		\end{property}
		
		\begin{property}\normalfont(General version of Property~\ref{prop3})\label{prop6}
			Suppose that $\gcd(\theta,\eta)=1$. Assume that $\chi_\eta:\{\xi\in \mathbb{Z}:\gcd(\xi,\eta)=1\}\to \{-1,1\}$ is a function and let 	
			$$\zeta_\eta(\theta)=\frac{1+\chi_\eta(\theta)}{2}.$$
			Thus, $\chi_\eta(\theta_1\theta_2)=\chi_\eta(\theta_1)\chi_\eta(\theta_2)$.
		\end{property}
		
		\begin{theorem}\normalfont\label{thm3.2}
			Let $\eta\geq 3$ be a positive integer and let  Properties~\ref{prop4}, \ref{prop5}, and \ref{prop6} hold for $\zeta_\eta^j$, for $j=1,2$. Let $S=\{1,2,\ldots,\eta m-1\}$ where $m\geq 1$ is an integer. Then $\langle S,\zeta_\eta^1,\cdot\rangle\cong \langle S,\zeta_\eta^2,\cdot\rangle$.
		\end{theorem}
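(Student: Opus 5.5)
The plan is to build $\psi$ explicitly: fix every element of $S$ that is not coprime to $\eta$, and on the coprime elements swap the level sets of $\zeta_\eta^1$ for those of $\zeta_\eta^2$. Everything reduces to the multiplicativity in Property~\ref{prop6}. For $j=1,2$ write $\zeta_\eta^j(\theta)=\frac{1+\chi_\eta^j(\theta)}{2}$ with $\chi_\eta^j$ multiplicative. Then $\zeta_\eta^j(\theta)=1$ if and only if $\chi_\eta^j(\theta)=1$. Since $\gcd(ab,\eta)=1$ if and only if $\gcd(a,\eta)=\gcd(b,\eta)=1$, for such $a,b$ we get
$$\chi_\eta^j(ab)=\chi_\eta^j(a)\chi_\eta^j(b).$$
So $\zeta_\eta^j(ab)$ is determined by the pair $\chi_\eta^j(a),\chi_\eta^j(b)$. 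It therefore suffices to find a bijection $\psi:S\to S$ that preserves coprimality to $\eta$ and satisfies $\chi_\eta^2(\psi(a))=\chi_\eta^1(a)$ for every $a\in S$ coprime to $\eta$.

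Next I set up the relevant sets and count them. Let $U=\{a\in S:\gcd(a,\eta)=1\}$ and, for $j=1,2$ and $\epsilon\in\{-1,1\}$, let $U_\epsilon^j=\{a\in U:\chi_\eta^j(a)=\epsilon\}$. Split $S$ into the blocks $\{k\eta+1,\ldots,k\eta+\eta-1\}$ for $k=0,\ldots,m-1$; the multiples $k\eta$ left out are not coprime to $\eta$ since $\eta\geq 3$. Each block contains exactly one representative of each residue class in $\varphi(\eta)$, so $|U|=m\phi(\eta)$. By Property~\ref{prop4} the value $\zeta_\eta^j(a)$, and hence $\chi_\eta^j(a)$, depends only on $a \bmod \eta$. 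Property~\ref{prop5} then says exactly half of the $\phi(\eta)$ residues in each block lie in $U_1^j$. Hence
$$|U_1^j|=|U_{-1}^j|=\frac{m\phi(\eta)}{2}\quad\text{for } j=1,2.$$
This counting is the only step needing care, namely the check that every reduced residue class appears exactly $m$ times in $S$; the rest is formal.

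Now I define $\psi$. Let $\psi$ be the identity on $S\setminus U$. Choose any bijection $U_1^1\to U_1^2$ and any bijection $U_{-1}^1\to U_{-1}^2$, which exist by the equal cardinalities above. Together these give a bijection $\psi:S\to S$ with $\psi(U)=U$ and $\chi_\eta^2(\psi(a))=\chi_\eta^1(a)$ for $a\in U$.

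Finally I verify Definition~\ref{equivalent} for $a\neq b$ in $S$. For condition (i): $\gcd(ab,\eta)=1$ if and only if $a,b\in U$, if and only if $\psi(a),\psi(b)\in U$, if and only if $\gcd(\psi(a)\psi(b),\eta)=1$. For condition (ii): when $\gcd(ab,\eta)=1$,
$$\chi_\eta^2(\psi(a)\psi(b))=\chi_\eta^2(\psi(a))\chi_\eta^2(\psi(b))=\chi_\eta^1(a)\chi_\eta^1(b)=\chi_\eta^1(ab),$$
which gives $\zeta_\eta^2(\psi(a)\psi(b))=\zeta_\eta^1(ab)$. Hence $\langle S,\zeta_\eta^1,\cdot\rangle\cong\langle S,\zeta_\eta^2,\cdot\rangle$.
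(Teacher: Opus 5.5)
Your proof is correct and follows the same idea as the paper. Both build a bijection that sends the $\zeta_\eta^1$-level sets of the elements coprime to $\eta$ onto the $\zeta_\eta^2$-level sets, keep the non-coprime elements among themselves, and then use Property~\ref{prop6} for condition (ii). The only difference is that the paper defines $\psi$ block by block ($q_i+(k-1)\eta\mapsto s_i+(k-1)\eta$, $r_i+(k-1)\eta\mapsto t_i+(k-1)\eta$), whereas you use Properties~\ref{prop4} and~\ref{prop5} only to count $|U_{\pm1}^j|=m\phi(\eta)/2$ and then take an arbitrary bijection between the global level sets, which is slightly cleaner.
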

		
		\begin{proof}
			Let $A_i^j=\{a:\zeta_\eta^j(a)=i,\text{ }a\in \varphi(\eta)\}$ for $i=0,1$ and $j=1,2$. Let $T=\{1,2,\ldots,\eta\}$ and, by Property~\ref{prop5}, suppose that 
			\begin{align*}
				A_0^1&=\{q_1,q_2,\ldots,q_{\phi(\eta)/2}\},\\ A_1^1&=\{r_1,r_2,\ldots,r_{\phi(\eta)/2}\},\\ 
				T-(A_0^1\cup A_1^1)&=\{w_1,w_2,\ldots,w_{\eta-1-\phi(\eta)/2},\eta\},\\
				A_0^2&=\{s_1,s_2,\ldots,s_{\phi(\eta)/2}\},\\
				A_1^2&=\{t_1,t_2,\ldots,t_{\phi(\eta)/2}\},\\
				T-(A_0^2\cup A_1^2)&=\{x_1,x_2,\ldots,x_{\eta-1-\phi(\eta)/2},\eta\}.
			\end{align*}
			Define a function $\psi:S\to S$ as follows: for each $k=1,2,\ldots,m$,
			\begin{align}
				\psi(q_i+(k-1)\eta)&=s_i+(k-1)\eta\text{ for }i=1,2,\ldots,\frac{\phi(\eta)}{2},\label{EQ1}\\
				\psi(r_i+(k-1)\eta)&=t_i+(k-1)\eta\text{ for }i=1,2,\ldots,\frac{\phi(\eta)}{2},\label{EQ2}\\
				\psi(w_i+(k-1)\eta)&=x_i+(k-1)\eta\text{ for }i=1,2,\ldots,\eta-1-\frac{\phi(\eta)}{2}\label{EQ3}\\
				\psi(\eta+(k-1)\eta)&=\eta+(k-1)\eta\text{ for }k\neq m. \label{EQ3.5}
			\end{align}
			Hence, $\psi$ is a bijective function. It should be noted that all elements of $A_i^j$ are relatively prime to $\eta$, for $i=0,1$ and $j=1,2$. In addition, note that all elements of $T-(A_0^j\cup A_1^j)$ are not relatively prime to $\eta$, for $j=1,2$. Thus, by equations (\ref{EQ1}) and (\ref{EQ2}), and Property~\ref{prop4}, for any $a,b\in A_0^1\cup A_1^1$, we have 
			$$\gcd([a+(k_1-1)\eta][b+(k_2-1)\eta],\eta)=\gcd(ab,\eta)=1$$
			if and only if 
			$$\gcd(\psi(a+(k_1-1)\eta)\psi(b+(k_2-1)\eta),\eta)=\gcd(\psi(a)\psi(b),\eta)=1$$ 
			because $\psi(a),\psi(b)\in A_0^2\cup A_1^2$, for $k_1,k_2=1,2,\ldots,m$. Similarly, by equations~(\ref{EQ3}) and (\ref{EQ3.5}), and Property~\ref{prop4}, for any $c\in T$ and $d\in T-(A_0^1\cup A_1^1)$, we have
			$$\gcd([c+(k_1-1)\eta][d+(k_2-1)\eta],\eta)=\gcd(cd,\eta)\neq 1$$
			if and only if 
			$$\gcd(\psi(c+(k_1-1)\eta)\psi(d+(k_2-1)\eta),\eta)=\gcd(\psi(c)\psi(d),\eta)\neq 1$$ 
			since $\psi(c)\in T$ and $\psi(d)\in T-(A_0^2\cup A_1^2)$, for $k_1,k_2=1,2,\ldots,m$, but $k_1=1,2,\ldots,m-1$ whenever $c=\eta$, and $k_2=1,2,\ldots,m-1$ whenerver $d=\eta$. Thus, Definition~\ref{equivalent} (i) holds. 
			
			\tab Assume that 
			\begin{equation}
				\zeta_\eta^j(\theta)=\frac{1+\chi_\eta^j(\theta)}{2}\text{ or } \chi_\eta^j(\theta)=2\zeta_\eta^j(\theta)-1\label{EQ4}
			\end{equation}
			where $\chi_\eta^j:\{\xi\in \mathbb{Z}:\gcd(\xi,\eta)=1\}\to \{-1,1\}$ is a function, for $j=1,2$. By Properties~\ref{prop4} and \ref{prop6}, and equations~(\ref{EQ1}), (\ref{EQ2}), and (\ref{EQ4}), for any $a,b\in A_0^1\cup A_1^1$, we have
			\begin{align*}
				\zeta_\eta^1([a+(k_1-1)\eta][b+(k_2-1)\eta])&=\zeta_\eta^1(ab)\\
				&=\frac{1+\chi_\eta^1(a)\chi_\eta^1(b)}{2}\\
				&=\begin{cases}
					0&\text{ if $a\in A_0^1$ and $b\in A_1^1$, or $a\in A_1^1$ and $b\in A_0^1$}\\
					1&\text{ if $a,b\in A_0^1$ or $a,b\in A_1^1$},\text{ and }\\
				\end{cases}\\
				\zeta_\eta^2(\psi(a+(k_1-1)\eta)\psi(b+(k_2-1)\eta))&=\zeta_\eta^2(\psi(a)\psi(b))\\
				&=\frac{1+\chi_\eta^2(\psi(a))\chi_\eta^2(\psi(b))}{2}\\
				&=\begin{cases}
					0&\text{ if $a\in A_0^1$ and $b\in A_1^1$, or $a\in A_1^1$ and $b\in A_0^1$}\\
					1&\text{ if $a,b\in A_0^1$ or $a,b\in A_1^1$},
				\end{cases}
			\end{align*}
			for $k_1,k_2=1,2,\ldots,m$. Clearly, Definition~\ref{equivalent} (ii) holds. Therefore,  $\langle S,\zeta_\eta^1,\cdot\rangle\cong \langle S,\zeta_\eta^2,\cdot\rangle$. 
		\end{proof}
		
		\tab The proof works identically if we define $S = \{1,2,\dots,\eta m\}$; in that case, one only needs to remove the condition $k \neq m$ in equation~(\ref{EQ3.5}). Combining this with Theorems~\ref{thm3.1} and \ref{thm3.2}, we obtain the following corollary.
		
		\begin{corollary}\normalfont\label{cor3.1}
			Let $\eta\geq 3$ be an integer and let $G$ be a graph order $\eta m+\varepsilon$, where $m$ is a positive integer and $\varepsilon\in\{-1,0\}$. Suppose that Properties~\ref{prop4}, \ref{prop5}, and \ref{prop6} hold for $\zeta_\eta^j$, for $j=1,2$. Then $G$ is an arithmetic cordial graph modulo $\eta$ under $\langle S, \zeta_\eta^1,\cdot\rangle$ if and only if it is an arithmetic cordial graph modulo $\eta$ under $\langle S, \zeta_\eta^2,\cdot\rangle$, where $S=\{1,2,\ldots,\eta m+\varepsilon\}$.
		\end{corollary}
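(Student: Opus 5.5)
The plan is to derive Corollary~\ref{cor3.1} directly from Theorems~\ref{thm3.1} and \ref{thm3.2}. The only work is to check that the arithmetic structures $\langle S,\zeta_\eta^1,\cdot\rangle$ and $\langle S,\zeta_\eta^2,\cdot\rangle$ are equivalent in the sense of Definition~\ref{equivalent} for both admissible values of $\varepsilon$. Once this equivalence is established, Theorem~\ref{thm3.1}, applied with $\eta_1=\eta_2=\eta$, $S_1=S_2=S$ and $\star_1=\star_2=\cdot$, gives the biconditional at once. That theorem only uses the equivalence and the fact that $G$ has order $|S|$.

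I split into the two cases $\varepsilon=-1$ and $\varepsilon=0$.

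\emph{Case $\varepsilon=-1$.} Here $S=\{1,2,\ldots,\eta m-1\}$. This is exactly the set in Theorem~\ref{thm3.2}, whose hypotheses (Properties~\ref{prop4}, \ref{prop5}, \ref{prop6} for $j=1,2$ and $\eta\geq 3$) coincide with those of the corollary. So Theorem~\ref{thm3.2} yields the equivalence directly.

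\emph{Case $\varepsilon=0$.} Here $S=\{1,2,\ldots,\eta m\}$. I would repeat the construction of $\psi$ from the proof of Theorem~\ref{thm3.2}, with equations (\ref{EQ1})--(\ref{EQ3}) unchanged and the restriction $k\neq m$ in (\ref{EQ3.5}) removed. Thus $\psi(k\eta)=k\eta$ for every $k=1,\ldots,m$. Then I verify three things:
\begin{enumerate}
\item[i.] $\psi$ is a bijection of $S$, since it maps each block $\{(k-1)\eta+1,\ldots,k\eta\}$ bijectively onto itself.
\item[ii.] Condition (i) of Definition~\ref{equivalent} holds. Indeed, $\psi$ preserves residue classes' coprimality with $\eta$: a unit class goes to a unit class, and a nonunit class goes to a nonunit class. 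Also $\gcd(ab,\eta)=1$ iff both factors are coprime to $\eta$.
\item[iii.] Condition (ii) of Definition~\ref{equivalent} holds. By Properties~\ref{prop4} and \ref{prop6}, $\zeta^j_\eta(ab)=\frac{1+\chi^j_\eta(a)\chi^j_\eta(b)}{2}$ depends only on whether $a$ and $b$ lie in the same class $A^j_i$ (reduced mod $\eta$). Moreover, $\psi$ maps the reductions of $A^1_0$ and $A^1_1$ to those of $A^2_0$ and $A^2_1$ respectively, which is possible by Property~\ref{prop5}.
\end{enumerate}

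The step needing the most care is the $\varepsilon=0$ case. There I must confirm that the newly included element $\eta m$, being divisible by $\eta$, only affects products that are not coprime to $\eta$. Then $\psi(\eta m)=\eta m$ keeps every such product non-coprime and creates no $\zeta$-comparison at all. With this check done, both cases give $\langle S,\zeta_\eta^1,\cdot\rangle\cong\langle S,\zeta_\eta^2,\cdot\rangle$, and Theorem~\ref{thm3.1} finishes the proof.
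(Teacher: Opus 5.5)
Your proposal is correct and follows the paper's route. The case $\varepsilon=-1$ is Theorem~\ref{thm3.2} verbatim, and the case $\varepsilon=0$ reuses the same $\psi$ with the restriction $k\neq m$ in (\ref{EQ3.5}) removed, as the remark before the corollary states; Theorem~\ref{thm3.1} then converts the equivalence of structures into the biconditional, and your checks on the block structure of $\psi$ and the new element $\eta m$ just make explicit what the paper leaves implicit.
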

		
		\tab Using equations~(\ref{eq0}) and (\ref{eq2}) (where $\zeta_p^1$ and $\zeta_p^3$ satisfy Properties~\ref{prop1}, \ref{prop2}, and \ref{prop3}), Corollary~\ref{cor3.1} shows that a graph of order $mp+\varepsilon$, where $p$ is an odd prime, $m$ is a positive integer, and $\varepsilon \in \{-1,0\}$, is a Legendre product cordial graph modulo $p$ if and only if it is an arithmetic cordial graph modulo $p$ under $\langle S, \zeta_\eta^3, \cdot \rangle$, where $S = \{1,2,\ldots, mp+\varepsilon\}$.
		
		\begin{theorem}\normalfont (General version of Theorem 4.3 in \cite{Andoyo3})\label{thmgen4.3} Let $\eta\geq 3$ be an integer and let $G$ be a graph of order $n$ and size $m\phi(\eta)$, where $m$ is a positive integer. Also, suppose that Properties~\ref{prop4} and \ref{prop5} hold for $\zeta_\eta$. Additionally, let $f:V(G)\to \{1,2,\ldots,n\}$ be a bijective function and assume that $h(uv)=f(u)\star f(v)$ for every $uv\in E(G)$. If $f$ satisfies the following conditions:
			\begin{enumerate}
				\item[i.] $$\{h(uv):uv\in E(G)\}=\bigcup_{i\in T}\{j+i\eta:j\in \varphi(\eta)\}$$ 
				where $\varphi(\eta)=\{\xi:\gcd(\xi,\eta)=1,\text{ }0<\xi<\eta\}$ and $T\subseteq \mathbb{Z}$ with $|T|=m$,
				\item[ii.] $h(uv)\neq h(ab)$ for every $uv\neq ab$,
			\end{enumerate}
			then $G$ is an arithmetic cordial graph modulo $\eta$ under $\langle S,\zeta_\eta,\star\rangle$, where $S\subseteq \mathbb{Z}$ with $|S|=n$.
		\end{theorem}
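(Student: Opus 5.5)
The plan is to count edge labels directly. Conditions (i) and (ii) say that $h$ is injective on $E(G)$ and that its image is a disjoint union of $m$ shifted copies of $\varphi(\eta)$. We then use Property~\ref{prop4} to transfer the value of $\zeta_\eta$ from each shifted copy back to $\varphi(\eta)$, and Property~\ref{prop5} to conclude that each copy contributes equally many $0$'s and $1$'s.

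First, since $|E(G)|=m\phi(\eta)$ and $h$ is injective by (ii), the image $H=\{h(uv):uv\in E(G)\}$ has exactly $m\phi(\eta)$ elements. Write $T=\{i_1,\ldots,i_m\}$. The sets $B_k=\{j+i_k\eta:j\in\varphi(\eta)\}$ are pairwise disjoint, because $0<j<\eta$ forces distinct residues-with-quotient. Each has $\phi(\eta)$ elements, so condition (i) makes $H$ their disjoint union. Hence $h$ is a bijection from $E(G)$ onto $\bigsqcup_{k=1}^m B_k$, and it suffices to count labels inside each $B_k$.

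Next, fix $k$ and an element $j+i_k\eta\in B_k$. Since $\gcd(j,\eta)=1$, we have $\gcd(j+i_k\eta,\eta)=1$, so the $\gcd\neq 1$ clause in the definition of $f_\eta^*$ never applies. The label of the corresponding edge is therefore exactly $\zeta_\eta(j+i_k\eta)$. By Property~\ref{prop4}, since $j+i_k\eta\equiv j\pmod{\eta}$, this equals $\zeta_\eta(j)$. By Property~\ref{prop5}, exactly $\phi(\eta)/2$ values $j\in\varphi(\eta)$ have $\zeta_\eta(j)=0$ and $\phi(\eta)/2$ have $\zeta_\eta(j)=1$. Thus each $B_k$ contributes $\phi(\eta)/2$ edges of each label. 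Summing over $k$ gives $e_{f_\eta^*}(0)=e_{f_\eta^*}(1)=m\phi(\eta)/2$, so the difference is $0\leq 1$.

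The only delicate point is bookkeeping about $S$. The labeling $f$ maps to $\{1,\ldots,n\}$, so I would take $S=\{1,\ldots,n\}$, consistent with the statement's $S\subseteq\mathbb{Z}$ with $|S|=n$, and regard $f$ as the required bijection $V(G)\to S$. We also need $\phi(\eta)$ to be even, which Property~\ref{prop5} implicitly guarantees and which holds anyway for $\eta\geq 3$. No step should be a genuine obstacle. The main care needed is in verifying the disjointness of the $B_k$ and noting that every value lies in the domain of $\zeta_\eta$.
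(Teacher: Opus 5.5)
Your proof is correct and follows the same route as the paper. You split the image of $h$ into the $m$ shifted copies $\{j+i\eta : j\in\varphi(\eta)\}$, transfer $\zeta_\eta$ back to $\varphi(\eta)$ via Property~\ref{prop4}, and use Property~\ref{prop5} to get $e_{f_\eta^*}(0)=e_{f_\eta^*}(1)=m\phi(\eta)/2$. You also make explicit two points the paper leaves implicit: the shifted copies are disjoint, and every $h(uv)$ is coprime to $\eta$, so the $\gcd$ clause never applies. Since your argument never uses the particular choice of $S$, it also covers the paper's later application with $S=\{c\eta\}\cup\cdots$.
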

		
		\begin{proof}
			Suppose that $\vartheta_i=\{j+i\eta:j\in\varphi(\eta)\}$ for $i\in T$. In addition, assume that $\vartheta_i\pmod{\eta}=\{\xi:a\equiv \xi\pmod{\eta},\text{ }a\in \vartheta_i,\text{ }0<\xi<\eta\}$ for $i\in T$. By Property~\ref{prop4}, we have
			$$\vartheta_i\pmod{\eta}=\varphi(\eta)$$
			for $i\in T$. By conditions (i) and (ii), together with Property~\ref{prop5}, we have
			$$e_{f_\eta^*}(0)=e_{f_\eta^*}(1)=\frac{m\phi(\eta)}{2}$$
			and so $|e_{f_\eta^*}(0)-e_{f_\eta^*}(1)|=0$. Therefore, $G$ is an arithmetic cordial graph modulo $\eta$ under $\langle S,\zeta_\eta,\star\rangle$, where $S\subseteq \mathbb{Z}$ with $|S|=n$.
		\end{proof}
		\tab Consider the Jacobi symbol $(a/n)$, which is defined as follows. Let $n\geq 1$ be an odd integer and let $a$ be an integer relatively prime to $n$. Then the Jacobi symbol is
		$$(a/n)=\prod_{i=1}^{t}(a/p_i)^{e_i}$$
		where $n=\prod_{i=1}^{t}p_i^{e_i}$ is the prime factorization of $n$, and the symbols on the right-hand side are Legendre symbols. This symbol generalizes the Legendre symbol by incorporating positive odd integers. Assume that
		\begin{equation}
			\zeta_\eta^5(a)=\frac{1+(a/\eta)}{2}\label{eq5}
		\end{equation}
		where $(a/\eta)$ is a Jacobi symbol. It is evident that $\zeta_\eta^5$ satisfies Properties~\ref{prop4} and~\ref{prop5} whenever $\eta$ is not a perfect square (see Theorem~11.10 (i) in~\cite{Rosen} (page 444) and Proposition~11 in~\cite{Mondal}). In fact, the function $\zeta_p^1$ defined in~(\ref{eq0}) is a special case of $\zeta_\eta^5$ when $\eta=p$, where $p$ is an odd prime. As an application of Theorem~\ref{thmgen4.3}, we obtain the following result.
		
		\begin{theorem}\normalfont
			Let $\eta\geq 3$ be an integer that is not a perfect square, and suppose that $c$ and $m$ are positive integers. The graph $\text{Star}_{m\phi(\eta)+1}$ is an arithmetic cordial graph modulo $\eta$ under $\langle S,\zeta_\eta^5,+\rangle$, where $S=\{c\eta\}\cup \left(\bigcup_{j=0}^{m-1}\{i+j\eta:i\in \varphi(\eta)\}\right)$ and the function $\zeta_\eta^5$ is defined in~(\ref{eq5}). 
		\end{theorem}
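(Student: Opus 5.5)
Label the central vertex $x_0$ with $f(x_0)=c\eta$, and label the pendant vertices $v_1,\dots,v_{m\phi(\eta)}$ bijectively with the elements of $\bigcup_{j=0}^{m-1}\{i+j\eta:i\in\varphi(\eta)\}$. We then apply Theorem~\ref{thmgen4.3} with $\star=+$ and $\zeta_\eta=\zeta_\eta^5$.

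First, $f$ is a bijection onto $S$. The sets $\{i+j\eta:i\in\varphi(\eta)\}$ for $j=0,\dots,m-1$ are pairwise disjoint, since their elements lie in the disjoint intervals $(j\eta,(j+1)\eta)$, and each has $\phi(\eta)$ elements. None of them contains $c\eta$, which is a multiple of $\eta$. Hence $|S|=m\phi(\eta)+1$, which is the order of $\text{Star}_{m\phi(\eta)+1}$. The size of the star is $m\phi(\eta)$, as Theorem~\ref{thmgen4.3} requires. By the remark preceding the statement, $\zeta_\eta^5$ satisfies Properties~\ref{prop4} and \ref{prop5} because $\eta$ is not a perfect square.

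Next, check conditions (i) and (ii). Every edge has the form $x_0v$, so
$$h(x_0v)=c\eta+f(v)=i+(j+c)\eta, \quad \text{where } f(v)=i+j\eta.$$
Thus $\{h(uv):uv\in E(G)\}=\bigcup_{t\in T}\{i+t\eta:i\in\varphi(\eta)\}$ with $T=\{c,c+1,\dots,c+m-1\}$ and $|T|=m$, which is condition (i). For condition (ii), the map $f(v)\mapsto f(v)+c\eta$ is injective and $f$ is injective on the pendant vertices, so distinct edges receive distinct values of $h$.

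Theorem~\ref{thmgen4.3} is stated with $f:V(G)\to\{1,\dots,n\}$ but concludes for a general $S$, and its proof uses only conditions (i), (ii) and Properties~\ref{prop4}, \ref{prop5}. We therefore use it with $f:V(G)\to S$. If one wants a self-contained argument instead, it is short. Each edge sum $i+t\eta$ is congruent to $i\in\varphi(\eta)$, so it is coprime to $\eta$. By Property~\ref{prop4}, $\zeta_\eta^5(i+t\eta)=\zeta_\eta^5(i)$. As $i$ runs over $\varphi(\eta)$ for each of the $m$ values of $t$, Property~\ref{prop5} gives exactly $\phi(\eta)/2$ edges labeled $0$ and $\phi(\eta)/2$ labeled $1$ in each block. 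Hence $e_{f_\eta^*}(0)=e_{f_\eta^*}(1)=m\phi(\eta)/2$, and the labeling is cordial.

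The main obstacle is only this mismatch in the codomain of $f$ in Theorem~\ref{thmgen4.3}; the self-contained argument above resolves it directly.
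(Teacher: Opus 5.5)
Your proposal is correct and is essentially the paper's proof. It uses the same labeling: the central vertex gets $c\eta$ and the pendant vertices get the elements of $\bigcup_{j=0}^{m-1}\{i+j\eta:i\in\varphi(\eta)\}$. It also has the same edge sums $i+(c+j)\eta$ with $T=\{c,c+1,\ldots,c+m-1\}$, and makes the same appeal to Theorem~\ref{thmgen4.3}.

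Your direct count via Properties~\ref{prop4} and \ref{prop5} is a sound way around the codomain mismatch in that theorem's hypothesis, which the paper passes over silently. Note that both arguments also tacitly take $\eta$ odd, as the Jacobi symbol requires.
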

		
		\begin{proof}
			Note that the size of $\text{Star}_{m\phi(\eta)+1}$ is $m\phi(\eta)$. Let $V(\text{Star}_{m\phi(\eta)+1})=\{v_0\}\cup \{v_i:i\in W\}$, where $W=\bigcup_{j=0}^{m-1}\{i+j\eta:i\in\varphi(\eta)\}$ and $v_0$ is the central vertex. Define a function $f:V(\text{Star}_{m\phi(\eta)+1})\to S$ by
			\begin{align*}
				f(v_0)&=c\eta,\\
				f(v_i)=i&\text{ for }i\in W,
			\end{align*}
			where $S=\{c\eta\}\cup \left(\bigcup_{j=0}^{m-1}\{i+j\eta:i\in \varphi(\eta)\}\right)$. Thus, $f$ is a bijective function. 
			
			\tab For the edges, we have
			$$f(v_0)+f(v_i)=c\eta+i+j\eta=i+(c+j)\eta$$
			for $i\in \varphi(\eta)$ and $j=0,1,\ldots,m-1$. Let $T=\{c+j:j=0,1,\ldots,m-1\}$. Hence,
			$$\{f(v_0)+f(v_i):i\in W\}=\bigcup_{k\in T}\{i+k\eta:i\in\varphi(\eta)\}.$$ 
			It is clear that conditions (i) and (ii) of Theorem~\ref{thmgen4.3} are satisfied. Thus, $\text{Star}_{m\phi(\eta)+1}$ is an arithmetic cordial graph modulo $\eta$ under $\langle S,\zeta_\eta^5,+\rangle$.
		\end{proof}

		\section{Arithmetic Cordial Labeling modulo $p$ under $\left\langle S,\zeta_p,+\right\rangle$}\label{sec3}
		\tab In this section, we assume that Properties~\ref{prop1} and \ref{prop2} hold for $\zeta_p$. 
		
		\begin{theorem}\normalfont
			The ladder graph $L_p$ is an arithmetic cordial graph modulo $p$ under $\langle S,\zeta_p,+\rangle$ where $S=\{1,2,\ldots,2p\}$, for all $p\geq 3$.
		\end{theorem}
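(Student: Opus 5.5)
We label the two paths of $L_p$ by an arithmetic progression modulo $p$ whose common difference $d$ is a residue with $\zeta_p(d)=0$. Then every edge sum runs over a complete, or almost complete, system of residues modulo $p$, and Property~\ref{prop2} does the counting.

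\textbf{Setup.} Write $V(L_p)=\{u_1,\dots,u_p\}\cup\{v_1,\dots,v_p\}$. The edges are the path edges $u_iu_{i+1}$ and $v_iv_{i+1}$ for $1\leq i\leq p-1$, together with the rungs $u_iv_i$. So the size is $3p-2$, which is odd. The target is therefore $e_{f_p^*}(1)=\frac{3(p-1)}{2}$ and $e_{f_p^*}(0)=\frac{3p-1}{2}$.

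By Property~\ref{prop1}, each edge label depends only on the residue of the edge sum modulo $p$. Residue $0$ always gives label $0$, via the gcd clause of the definition.

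\textbf{Choice of $d$ and the labeling.} Property~\ref{prop2} says exactly half of $1,\dots,p-1$ have $\zeta_p$-value $0$, so we may fix $d\in\{1,\dots,p-1\}$ with $\zeta_p(d)=0$. Define
\begin{itemize}
\item[] $f(u_i)$ to be the unique element of $\{1,\dots,p\}$ congruent to $di \pmod p$, and
\item[] $f(v_i)=f(u_i)+p$.
\end{itemize}
Since $\gcd(d,p)=1$, the values $di$ for $i=1,\dots,p$ run over all residues. Hence $f$ maps $\{u_i\}$ bijectively onto $\{1,\dots,p\}$ and $\{v_i\}$ bijectively onto $\{p+1,\dots,2p\}$, so $f$ is a bijection onto $S$.

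\textbf{Counting.}
\begin{itemize}
\item[] \emph{Rungs.} $f(u_i)+f(v_i)\equiv 2di\pmod p$ for $i=1,\dots,p$. Since $2d$ is invertible modulo $p$, these hit every residue exactly once. Residue $0$ contributes one edge labeled $0$, and by Property~\ref{prop2} the nonzero residues contribute $\frac{p-1}{2}$ edges of each label.
\item[] \emph{Each path.} $f(u_i)+f(u_{i+1})\equiv d(2i+1)\pmod p$ for $i=1,\dots,p-1$, and the bottom path gives the same residues because the extra $2p\equiv 0$. As $i$ ranges over $0,\dots,p-1$, the values $d(2i+1)$ are pairwise distinct modulo $p$. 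Our range omits only $i=0$, so each path realizes every residue except $d$. Since $\zeta_p(d)=0$, each path therefore has exactly $\frac{p-1}{2}$ edges labeled $1$, and the rest, $\frac{p-1}{2}$ edges, labeled $0$ (one of them from residue $0$).
\end{itemize}
Summing the rungs and the two paths gives $e_{f_p^*}(1)=\frac{3(p-1)}{2}$ and $e_{f_p^*}(0)=(3p-2)-\frac{3(p-1)}{2}=\frac{3p-1}{2}$. Hence $|e_{f_p^*}(0)-e_{f_p^*}(1)|=1$, as required.

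\textbf{The key point.} The delicate step is controlling which residue the path sums miss. With the naive choice $d=1$ (that is, $f(u_i)=i$ and $f(v_i)=p+i$), both paths miss residue $1$. The count then works only when $\zeta_p(1)=0$, which Properties~\ref{prop1} and~\ref{prop2} do not guarantee. Scaling by a $d$ with $\zeta_p(d)=0$ fixes this uniformly. Beyond that, one only needs to check that the integer labels stay in $\{1,\dots,2p\}$ and that only the residues of the sums matter, which is Property~\ref{prop1}.
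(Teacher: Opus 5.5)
Your proof is correct and follows the same overall plan as the paper: label each path by an arithmetic progression modulo $p$, place the second path exactly $p$ higher, and count residues using Properties~\ref{prop1} and~\ref{prop2}.

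The real difference is how you handle the one residue that the $p-1$ path edges miss. If the first path carries $a+di \pmod p$ for $i=1,\dots,p$, then:
\begin{itemize}
\item the path sums are $\equiv 2a+d(2i+1)$, so the omitted residue is $2a+d$;
\item the rung sums are $\equiv 2a+2di$, so the rungs always hit every residue exactly once.
\end{itemize}

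You take $a=0$ and $d\in A_0$. Your paths then miss a residue with $\zeta_p$-value $0$ but contain residue $0$, which compensates.

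The paper takes $d=1$ and $a=\frac{p-1}{2}$: its $f(v_i^1)$ is $i+\frac{p-1}{2}$ reduced into $\{1,\dots,p\}$. This makes $2a+d\equiv 0$. The path sums are $\equiv 2i$ and run over exactly the nonzero residues. The only unbalanced edge is then the rung $v^1_{(p+1)/2}v^2_{(p+1)/2}$, whose sum is $3p$.

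The paper's translation gives one explicit labeling that works for every $\zeta_p$ satisfying Properties~\ref{prop1} and~\ref{prop2} at once. Your dilation instead requires choosing $d$ with $\zeta_p(d)=0$, so your labeling depends on $\zeta_p$. That is harmless, since Property~\ref{prop2} guarantees such a $d$ exists. Your remark that $d=1$ with $a=0$ would force $\zeta_p(1)=0$ is right. But scaling is not the only fix: shifting so that the omitted residue is $0$ avoids the issue with no choice at all.

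Finally, your totals $e_{f_p^*}(0)=\frac{3p-1}{2}$ and $e_{f_p^*}(1)=\frac{3(p-1)}{2}$ are the correct ones. The paper's displayed values $p$ and $p-1$ account for only two of the three edge classes and do not add up to $3p-2$, although the difference of $1$, and hence the theorem, is unaffected.
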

		
		\begin{proof}
			Assume that $P_p^1$ and $P_p^2$ are path graphs of $L_p$ with $V(P_p^j)=\{v_1^j,v_2^j,\ldots,v_p^j\}$ for $j=1,2$. Let $f:V(G)\to S$ be a function defined by 
			$$f(v_i^j)=\begin{cases}
				i+\frac{p-1}{2}+(j-1)p&\text{ for }i=1,2,\ldots,\frac{p+1}{2}\\
				i-\frac{p+1}{2}+(j-1)p&\text{ for }i=\frac{p+3}{2},\frac{p+5}{2},\ldots,p,
			\end{cases}$$
			for $j=1,2$, where $S=\{1,2,\ldots,2p\}$. Clearly, $f$ is a bijective function.
			
			\tab For the edges of $P_p^j$, 
			$$f(v_i^j)+f(v_{i+1}^j)\equiv \begin{cases}
				2i\pmod{p}&\text{ for }i=1,2,\ldots,\frac{p-1}{2}\\
				2i-p\pmod{p}&\text{ for }i=\frac{p+1}{2},\frac{p+3}{2},\ldots,p-1
			\end{cases}$$ 
			for $j=1,2$, and for the remaining edges,
			$$f(v_i^1)+f(v_i^2)\equiv \begin{cases}
				2i-1\pmod{p}&\text{ for }i=1,2,\ldots,\frac{p-1}{2}\\
				0\pmod{p}&\text{ for }i=\frac{p+1}{2}\\
				2i-p-1\pmod{p}&\text{ for }i=\frac{p+3}{2},\frac{p+5}{2},\ldots,p.
			\end{cases}$$
			In view of this labeling, let 
			$$\alpha_j=\bigcup_{i=1}^{p-1}\{\xi:f(v_i^j)+f(v_{i+1}^j)\equiv \xi\pmod{p},\text{ }0< \xi<p\}$$
			for $j=1,2$, and let
			$$\beta=\bigcup_{\substack{i=1\\i\neq (p+1)/2}}^p\{\xi:f(v_i^1)+f(v_i^2)\equiv \xi\pmod{p},\text{ }0<\xi<p\}.$$
			Evidently, $\alpha_1=\alpha_2=\beta=\{1,2,\ldots,p-1\}$. By Property~\ref{prop2}, together with the fact that $f_p^*(v_{(p+1)/2}^1v_{(p+1)/2}^2)=0$, we have
			$$e_{f_p^*}(0)=2\left(\frac{p-1}{2}\right)+1=p\text{ and }e_{f_p^*}(1)=2\left(\frac{p-1}{2}\right)=p-1.$$
			Therefore, $|e_{f_p^*}(0)-e_{f_p^*}(1)|=1$ and it follows that $L_p$ is an arithmetic cordial graph modulo $p$ under $\langle S,\zeta_p,+\rangle$.
		\end{proof}

		\begin{theorem}\normalfont
			The alternate cycle snake graph $A_n(C_p)$ is an arithmetic cordial graph modulo $p$ under $\langle S,\zeta_p,+\rangle$ where $\zeta_p(1)=1$ and $S=\{1,2,\ldots,np\}$, for all $n\geq 2$ and $p\geq 3$.
		\end{theorem}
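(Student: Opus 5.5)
The plan is to label each cycle of $A_n(C_p)$ with its own block of $p$ consecutive integers, in natural cyclic order. Then every cycle produces each residue modulo $p$ exactly once among its edge sums, and every connecting edge has sum $\equiv 1 \pmod p$. The hypothesis $\zeta_p(1)=1$ is then what balances the count.

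\textbf{The labeling.} Write $V(C_p^j)=\{v_1^j,\ldots,v_p^j\}$, with $v_p^j$ adjacent to $v_1^{j+1}$. Define
$$f(v_i^j)=i+(j-1)p \quad \text{for } i=1,\ldots,p,\ j=1,\ldots,n.$$
This is clearly a bijection onto $S=\{1,2,\ldots,np\}$.

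\textbf{Edge sums in the $j$th cycle.} For the path edges,
$$f(v_i^j)+f(v_{i+1}^j)=2i+1+2(j-1)p\equiv 2i+1 \pmod p, \qquad i=1,\ldots,p-1.$$
Since $2$ is invertible modulo $p$, these $p-1$ values are pairwise distinct modulo $p$. They miss exactly the residue $2\cdot 0+1=1$. The closing edge $v_p^jv_1^j$ has sum $p+1+2(j-1)p\equiv 1 \pmod p$, which supplies the missing residue. So the $p$ edges of $C_p^j$ realize each residue $0,1,\ldots,p-1$ exactly once.

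\textbf{Labels within one cycle.} The edge with residue $0$ has sum not coprime to $p$, so it gets label $0$. By Property~\ref{prop1}, each other edge gets label $\zeta_p$ of its residue. By Property~\ref{prop2}, the residues $1,\ldots,p-1$ then contribute $\frac{p-1}{2}$ labels $0$ and $\frac{p-1}{2}$ labels $1$. Hence each cycle contributes $\frac{p+1}{2}$ zeros and $\frac{p-1}{2}$ ones.

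\textbf{Connecting edges.} For $j=1,\ldots,n-1$,
$$f(v_p^j)+f(v_1^{j+1})=jp+1+jp=2jp+1\equiv 1\pmod p.$$
By Property~\ref{prop1} and $\zeta_p(1)=1$, all $n-1$ connecting edges receive label $1$.

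\textbf{Counting.} Combining the two contributions,
$$e_{f_p^*}(0)=\frac{n(p+1)}{2},\qquad e_{f_p^*}(1)=\frac{n(p-1)}{2}+n-1=\frac{n(p+1)}{2}-1.$$
Their difference is $1$, so $f$ is an arithmetic cordial labeling.

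\textbf{Main obstacle.} The delicate step is choosing the rotation of each cycle's labels. A naive choice, such as the arrangement used for $L_p$, places the pair of residues summing to $0$ on the connecting vertices. Then every connecting edge gets label $0$, and the $n$ surplus zeros from the cycles cannot be offset. The natural order $1,2,\ldots,p$ fixes this: the connecting pair becomes (residue $0$, residue $1$), and each connecting edge is forced to have residue $1$, where $\zeta_p(1)=1$ applies.
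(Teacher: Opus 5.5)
Your proposal is correct and is essentially the paper's own proof. It uses the same labeling $f(v_i^j)=i+(j-1)p$, the same observation that the $p$ edges of each cycle realize every residue modulo $p$ exactly once while each connecting edge has sum $2jp+1\equiv 1\pmod p$, and the same final count $e_{f_p^*}(0)=\frac{n(p+1)}{2}$, $e_{f_p^*}(1)=\frac{n(p+1)}{2}-1$; the only difference is cosmetic, since the paper writes out an explicit case split for the residues where you invoke the invertibility of $2$.
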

		
		\begin{proof}
			Assume that $C_p^1,C_p^2,\ldots,C_p^n$ are cycle graphs of $A_n(C_p)$ with $V(C_p^j)=\{v_1^j,v_2^j,\ldots,v_p^j\}$ for $j=1,2,\ldots,n$. Now, define a function $f:V(A_n(C_p))\to S$ by
			$$f(v_i^j)=i+(j-1)p$$
			for $i=1,2,\ldots,p$ and $j=1,2,\ldots,n$, where $S=\{1,2,\ldots,np\}$. Hence, $f$ is a bijective function.
			
			\tab For the edges of $C_p^j$, we have
			$$f(v_i^j)+f(v_{i+1}^j)\equiv \begin{cases}
				2i+1\pmod{p}&\text{ for }i=1,2,\ldots,\frac{p-3}{2}\\
				0\pmod{p}&\text{ for }i=\frac{p-1}{2}\\
				2i-p+1\pmod{p}&\text{ for }i=\frac{p+1}{2},\frac{p+3}{2},\ldots,p-1,
			\end{cases}$$
			and $f(v_1^j)+f(v_p^j)\equiv 1\pmod{p}$, for $j=1,2,\ldots,n$. Additionally, for the remaining edges, 
			$$f(v_p^k)+f(v_1^{k+1})\equiv 1\pmod{p}$$
			for $k=1,2,\ldots,n-1$. 
			
			\tab In view of the above labeling, for each $j=1,2,\ldots,n$, assume that 
			$$\alpha_j=\bigcup_{\substack{i=1\\i\neq (p-1)/2}}^{p-1}\{\xi:f(v_i^j)+f(v_{i+1}^j)\equiv \xi\pmod{p},\text{ }0<\xi<p\},\text{ and }$$
			$$\beta_j=\{\xi:f(v_1^j)+f(v_p^j)\equiv 1\pmod{p},\text{ }0<\xi<p\}.$$
			Consequently, $\alpha_j\cup\beta_j=\{1,2,\ldots,p-1\}$ for $j=1,2,\ldots,n$. By Property~\ref{prop2}, there are $\frac{p-1}{2}$ edges of $C_p^j$ with label $0$ and $\frac{p-1}{2}$ edges with label $1$, under $f_p^*$, for $j=1,2,\ldots,n$. Moreover, observe that $f_p^*(v_{(p-1)/2}^jv_{(p-1)/2}^j)=0$ for $j=1,2,\ldots,n$, and $f_p^*(v_1^kv_p^{k+1})=1$ for $k=1,2,\ldots,n-1$ (since $\zeta_p(1)=1$). Therefore,  
			$$e_{f_p^*}(0)=n\left(\frac{p-1}{2}\right)+n\text{ and }e_{f_p^*}(1)=n\left(\frac{p-1}{2}\right)+n-1.$$
			Consequently, $|e_{f_p^*}(0)-e_{f_p^*}(1)|=1$. Hence, $A_n(C_p)$ is an arithmetic cordial graph modulo $p$ under $\langle S,\zeta_p,+\rangle$.
		\end{proof}
		
		\begin{theorem}\normalfont
			Let $G$ be a connected graph of order $n$ and assume that $\varepsilon\in \{-1,0,1\}$. If $G$ has size $n+\varepsilon$, then $G\circ P_{p-1}$ is an arithmetic cordial graph modulo $p$ under $\left\langle S,\zeta_p,+\right\rangle$, where $\zeta_p(1)=1$, $\zeta_p(2)=0$, and $S=\{1,2,\ldots,np\}$, for all $n\geq 2$ and $p\geq 3$.
		\end{theorem}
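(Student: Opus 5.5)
The plan is to give an explicit labeling built block by block, one block of $p$ consecutive integers for each vertex of $G$, and then count edge labels residue by residue using Properties~\ref{prop1} and \ref{prop2}, as in the proofs for $L_p$ and $A_n(C_p)$. Let $u_1,\ldots,u_n$ be the vertices of $G$. Let the $j$th copy of $P_{p-1}$ have vertices $v_1^j,\ldots,v_{p-1}^j$, and give every vertex of the $j$th block a label in $\{(j-1)p+1,\ldots,jp\}$. The count is $|E(G\circ P_{p-1})|=(n+\varepsilon)+n(p-2)+n(p-1)$. I would set $f(u_j)=jp$, so every hub is $\equiv 0\pmod p$, and give the path vertices the labels $(j-1)p+1,\ldots,(j-1)p+(p-1)$ in an order to be chosen. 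This $f$ is clearly a bijection onto $S=\{1,\ldots,np\}$.

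\emph{Spoke edges.} The $p-1$ edges $u_jv_i^j$ have sums $\equiv f(v_i^j)\pmod p$. These run over the residues $1,\ldots,p-1$ exactly once, so by Property~\ref{prop2} each block contributes $\frac{p-1}{2}$ zeros and $\frac{p-1}{2}$ ones. \emph{Edges of $G$.} Every sum $f(u_j)+f(u_k)$ is $\equiv 0\pmod p$, so all $n+\varepsilon$ edges of $G$ receive label $0$, however $G$ is structured. This is why only the size of $G$ enters the hypothesis. \emph{Path edges.} Each block should produce exactly one more edge labeled $1$ than labeled $0$. Then the path edges contribute $n$ net ones, and the total difference is $|(n+\varepsilon)-n|=|\varepsilon|\le 1$.

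So the whole proof reduces to one combinatorial lemma. We need an ordering $x_1,\ldots,x_{p-1}$ of $1,\ldots,p-1$ for which the $p-2$ consecutive sums $x_i+x_{i+1}$ are nonzero, pairwise distinct mod $p$, and miss exactly one residue, that residue having $\zeta_p$-value $0$. Missing a residue with $\zeta_p=0$ leaves $\frac{p-3}{2}$ zeros and $\frac{p-1}{2}$ ones, again by Property~\ref{prop2}. The hypothesis $\zeta_p(2)=0$ suggests aiming for the missing residue to be $2$. The hypothesis $\zeta_p(1)=1$ is there for the fallback version of the count, where one sum is allowed to be $\equiv 0$ and the ordering instead misses residue $1$ together with one other residue.

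I expect constructing this ordering to be the main obstacle. The naive ordering $1,2,\ldots,p-1$ gives sums $\{3,5,\ldots,p-2\}\cup\{0\}\cup\{2,4,\ldots,p-3\}$. That misses $1$ and $p-1$ and hits $0$ once, so its count depends on $\zeta_p(p-1)$, which is not controlled. I would first try splitting into two monotone runs, for example $\frac{p+1}{2},\ldots,p-1$ followed by a run through the small residues. The even sums then come from the first run and the odd sums from the second, and the junction should be arranged so the two runs overlap only on the residue pattern we want. If no uniform ordering works, the fallback is to change the hub label. Taking $f(u_j)\equiv 1$ or $2\pmod p$ in some blocks, with the $jp$ label moved onto a path vertex, shifts which residues the spokes and the $G$-edges produce, and $\zeta_p(1)=1$, $\zeta_p(2)=0$ then make the final tally explicit. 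After that, verifying $|e_{f_p^*}(0)-e_{f_p^*}(1)|\le 1$ is the same routine bookkeeping as in the $A_n(C_p)$ proof.
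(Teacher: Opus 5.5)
Your reduction is correct, but the proof has a genuine gap: the ordering lemma it depends on is never proved, and it is false for $p=3$.

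With every hub labeled $jp$, all $n+\varepsilon$ edges of $G$ get label $0$, and the spokes are balanced by Property~\ref{prop2}. So your proof rests entirely on one claim: an ordering of $1,\ldots,p-1$ whose $p-2$ consecutive sums are distinct, nonzero modulo $p$, and miss exactly the residue $2$. You do not prove this, and several things go wrong.

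\begin{itemize}
\item For $p=3$, which the theorem covers, the claim is false. The only path edge joins residues $1$ and $2$, so its sum is $\equiv 0$. Your labeling then gives $3n+\varepsilon$ zeros against $n$ ones.
\item The two-run construction you sketch cannot work for any $p\geq 5$. If $1,\ldots,p-1$ is split into two runs of consecutive integers joined once, the internal sums are $2x+1$ for every $x\in\{1,\ldots,p-2\}$ except one value. They therefore include $x=\frac{p-1}{2}$ or $x=\frac{p+1}{2}$, which is a sum $\equiv 0$ or $\equiv 2$. Your run $\frac{p+1}{2},\ldots,p-1$ already contains $\frac{p+1}{2}+\frac{p+3}{2}\equiv 2$.
\item Suitable orderings do exist for some small primes (for $p=5$, the ordering $4,2,1,3$ works), but you give no construction valid for all $p$.
\item The fallback does not rescue the argument as written. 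A path with one sum $\equiv 0$ whose nonzero sums miss $1$ and some $b$ has at least one more $0$ than $1$. That is the wrong sign when the $G$-edges are already all $0$.
\item Changing the hub residue in only some blocks makes the labels on $E(G)$ depend on which hubs are adjacent. You cannot control that for an arbitrary $G$.
\end{itemize}

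The missing idea is to give every hub the same residue $h=\frac{p+1}{2}$, so that $2h\equiv 1$. Then every edge of $G$ has sum $\equiv 1$ and is labeled $1$ by $\zeta_p(1)=1$, whatever $G$ is. The multiple of $p$ moves onto the path, and this is exactly the paper's construction.

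\begin{itemize}
\item The paper labels the $i$th copy of $P_{p-1}$ with $\frac{p+3}{2},\ldots,p,1,\ldots,\frac{p-1}{2}$, each shifted by $(i-1)p$. This is a single cyclic run that skips only $h$.
\item Its consecutive sums $2x+1$ miss exactly the residues $0$ and $2$. This gives $\frac{p-3}{2}$ zeros and $\frac{p-1}{2}$ ones per copy.
\item The spoke sums $x+h$ miss only the residue $1$ and include $0$ once. This gives $\frac{p+1}{2}$ zeros and $\frac{p-3}{2}$ ones per copy.
\end{itemize}

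The totals are $n(p-1)$ zeros and $n(p-1)+\varepsilon$ ones. No ordering lemma is needed, and $p=3$ is included (the path there is $3,1$). The paper's text writes the $G$-edge sum as $\equiv 2$, but in fact $2\cdot\frac{p+1}{2}\equiv 1$.
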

		
		\begin{proof}
			Let $V(G)=\{v_1,v_2,\ldots,v_n\}$ and assume that $P_{p-1}^i$ is the $i$th copy of $P_{p-1}$ with $V(P_{p-1}^i)=\{u_1^i,u_2^i,\ldots,u_{p-1}^i\}$ such that each vertex of $P_{p-1}^i$ is adjacent to $v_i$, for $i=1,2,\ldots,n$. Now, let $f:V(G\circ P_{p-1})\to S$ be a function defined by
			\begin{align*}
				f(u_j^i)&=\begin{cases}
					j+\frac{p+1}{2}+(i-1)p&\text{ for }j=1,2,\ldots,\frac{p-1}{2}\\
					j-\frac{p-1}{2}+(i-1)p&\text{ for }j=\frac{p+1}{2},\frac{p+3}{2},\ldots,p-1,
				\end{cases}\\
				f(v_i)&=\frac{p+1}{2}+(i-1)p,
			\end{align*}
			for $i=1,2,\ldots,n$, where $S=\{1,2,\ldots,np\}$.
			
			\tab For the edges of $P_{p-1}^i$, we have
			$$f(u_j^i)+f(u_{j+1}^i)\equiv \begin{cases}
				2j+2\pmod{p}&\text{ for }j=1,2,\ldots,\frac{p-3}{2}\\
				2j+2-p\pmod{p}&\text{ for }j=\frac{p-1}{2},\frac{p+1}{2},\ldots,p-2
			\end{cases}$$
			for $i=1,2,\ldots,n$. With this labeling, for each $i=1,2,\ldots,n$, let 
			$$\alpha_i=\bigcup_{j=1}^{p-2}\{\xi:f(u_j^i)+f(u_{j+1}^i)\equiv \xi\pmod{p},\text{ }0<\xi<p\}$$
			and it follows that $\alpha_i=\{1\}\cup \{3,4,\ldots,p-1\}$. Since $2\in A_0$ and $2\notin \alpha_i$, by Property~\ref{prop2}, $\frac{p-1}{2}-1$ edges of $P_{p-1}^i$ labeled $0$ and $\frac{p-1}{2}$ edges are labeled $1$, under $f_p^*$, for each $i=1,2,\ldots,n$. 
			
			\tab For the edges of the form $u_j^iv_i$, we have
			\begin{align*}
				f(u_j^i)+f(v_i)&\equiv j+1 \pmod{p}\text{ for }j=1,2,\ldots,p-1,\\
				f(u_{p-1}^i)+f(v_i)&\equiv 0\pmod{p}
			\end{align*}
			for $i=1,2,\ldots,n$. In view of this labeling, let 
			$$\beta_i=\bigcup_{j=1}^{p-2}\{\xi:f(u_j^i)+f(v_i)\equiv \xi\pmod{p},\text{ }0<\xi<p\}$$
			and it follows that $\beta_i=\{2,3,\ldots,p-1\}$, for $i=1,2,\ldots,n$. Note that $1\in A_1$ but $1\notin \beta_i$, for $i=1,2,\ldots,n$.  In addition, observe that $f_p^*(u_{p-1}^iv_i)=0$ for $i=1,2,\ldots,n$. Combining this with Property~\ref{prop2}, we have $n\left(\frac{p-1}{2}\right)+n$ edges of the form $u_j^iv_i$ with label $0$ and $n\left(\frac{p-1}{2}-1\right)$ edges with label $1$, under $f_p^*$.
			
			\tab Lastly, for the edges of $G$, observe that 
			$$f(a)+f(b)\equiv 2\pmod{p}$$
			for any $ab\in E(G)$. Since $\zeta_p(2)=1$, it follows that all edges of $G$ have label $1$, under $f_p^*$. 
			
			\tab Therefore, 
			$$e_{f_p^*}(0)=n\left(\frac{p-1}{2}-1\right)+n\left(\frac{p-1}{2}\right)+n\text{ and }$$
			$$e_{f_p^*}(1)=n\left(\frac{p-1}{2}\right)+n\left(\frac{p-1}{2}-1\right)+n+\varepsilon.$$
			Since $\varepsilon\in\{-1,0,1\}$, we have $|e_{f_p^*}(0)-e_{f_p^*}(1)|\leq 1$. So, $G\circ P_{p-1}$ is an arithmetic cordial graph modulo $p$ under $\langle S,\zeta_p,+\rangle$.
		\end{proof}
		
		\begin{theorem}\normalfont
			Let $G$ be a connected bipartite graph of order $n$. Then the tensor product graph $K_p\times G$ is an arithmetic cordial graph modulo $p$ under $\left\langle S,\zeta_p,+\right\rangle$ where $S=\{1,2,\ldots,np\}$, for all $n\geq 2$ and $p\geq 3$.
		\end{theorem}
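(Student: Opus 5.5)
The plan is to choose a labeling on each fiber $\{(x_i,v):i=1,\ldots,p\}$ so that, for every edge $ab$ of $G$, the sums over the $p(p-1)$ corresponding edges of $K_p\times G$ are congruent to nonzero residues only, with each nonzero residue mod $p$ appearing equally often. Property~\ref{prop2} then gives an exact balance.

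\textbf{Setup.} Let $V(K_p)=\{x_1,\ldots,x_p\}$ and let $V(G)=\{v_1,\ldots,v_n\}$ have partite sets $V_1$ and $V_2$. Every edge of $K_p\times G$ has the form $(x_i,a)(x_j,b)$ with $i\neq j$ and $ab\in E(G)$, where we may take $a\in V_1$ and $b\in V_2$. For a fixed edge $ab$, the ordered pairs $(i,j)$ with $i\neq j$ correspond bijectively to the edges lying over $ab$. Hence each edge of $G$ contributes exactly $p(p-1)$ edges, and $|E(K_p\times G)|=p(p-1)|E(G)|$.

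\textbf{Labeling.} Define $f:V(K_p\times G)\to S$ by
$$f(x_i,v_k)=\begin{cases} i+(k-1)p & \text{if } v_k\in V_1,\\ (p-i)+(k-1)p & \text{if } v_k\in V_2,\ i\neq p,\\ kp & \text{if } v_k\in V_2,\ i=p.\end{cases}$$
Each block $\{(x_i,v_k):i=1,\ldots,p\}$ receives exactly the labels $\{1+(k-1)p,\ldots,kp\}$. Thus $f$ is a bijection onto $S=\{1,2,\ldots,np\}$. In every case the second coordinate satisfies $f(x_i,v)\equiv i$ on $V_1$ and $f(x_j,v)\equiv -j \pmod p$ on $V_2$.

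\textbf{Counting the edge labels.} For an edge $(x_i,a)(x_j,b)$ with $a\in V_1$ and $b\in V_2$, we get $f(x_i,a)+f(x_j,b)\equiv i-j\pmod p$. Since $i\neq j$ and $1\le i,j\le p$, this is never $0 \pmod p$. Moreover, each nonzero residue $r$ occurs exactly $p$ times among the $p(p-1)$ ordered pairs: for each $i$ there is exactly one $j$ with $i-j\equiv r$, and that $j$ differs from $i$. By Property~\ref{prop1}, the label of each edge depends only on its residue. By Property~\ref{prop2}, exactly $\frac{p-1}{2}$ nonzero residues have $\zeta_p=0$. So each edge of $G$ contributes $p\cdot\frac{p-1}{2}$ edges labeled $0$ and the same number labeled $1$. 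Summing over $E(G)$ gives $e_{f_p^*}(0)=e_{f_p^*}(1)$, which proves the theorem.

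\textbf{Main obstacle.} The key step is the choice of the reflection $j\mapsto -j$ on the $V_2$ side. The naive labeling $i+(k-1)p$ on both sides fails: the sums $i+j$ hit residue $0$ exactly $p-1$ times per edge of $G$, which creates an imbalance of $p-1$ per edge. Bipartiteness is exactly what lets us apply the reflection consistently, turning sums into differences $i-j$, which avoid $0$. The remaining checks are that the $i=p$ case (label $kp\equiv 0\equiv -p$) fits the congruence, and that no $\gcd$-zero edges arise.
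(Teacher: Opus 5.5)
Your proposal is correct and follows essentially the same route as the paper: you label the $V_1$ fibers by $i$ and the $V_2$ fibers by $p-i$ (with $p$ when $i=p$), so every edge sum is congruent to $i-j\not\equiv 0\pmod{p}$, and then Properties~\ref{prop1} and~\ref{prop2} give an exact balance. The only differences are cosmetic: you index the blocks globally instead of listing $V_1$ before $V_2$, and you count each nonzero residue as occurring $p$ times per edge of $G$ rather than fixing $i$ and observing that the residues over $j\neq i$ run through $\{1,\ldots,p-1\}$.
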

		
		\begin{proof}
			Let $V(K_p)=\{v_1,v_2,\ldots,v_p\}$ and let $m$ be the size of $G$. Since $G$ is a bipartite graph, its vertex set $V(G)$ can be partitioned into two nonempty subsets $V_1$ and $V_2$ with $V_j=\{u_1^j,u_2^j,\ldots,u_{n_j}^j\}$ for $j=1,2$, where $n_1+n_2=n$. Thus,
			$$E(K_p\times G)=\bigcup_{u_x^1u_y^2\in E(G)}\{(v_i,u_x^1)(v_k,u_y^2):i,k=1,2,\ldots,p,\text{ }i\neq k\}.$$
			\tab Let $f:V(K_p\times G)\to S$ be a function, where $S=\{1,2,\ldots,np\}$. Define $f$ as follows: for each $t_j=1,2,\ldots,n_j$,
			$$f((v_i,u_{t_j}^j))=\begin{cases}
				i+(t_1-1)p&\text{ for }j=1\text{ and }i=1,2,\ldots,p\\
				p-i+(n_1+t_2-1)p&\text{ for }j=2\text{ and }i=1,2,\ldots,p-1\\
				p+(n_1+t_2-1)p&\text{ for }j=2\text{ and }i=p.
			\end{cases}$$
			Hence, $f$ is a bijective function. For the edges,
			for each $u_x^1u_y^2\in E(G)$ and each $i=1,2,\ldots,p$ with $i\neq k$, we have
			$$f((v_i,u_x^1))+f((v_k,u_y^2))\equiv \begin{cases}
				i-k\pmod{p}&\text{ for }k=1,2,\ldots,p-1\\
				i\pmod{p}&\text{ for }k=p.
			\end{cases}$$
			In accordance with the above labeling, for each $u_x^1u_y^2\in E(G)$ and each $i=1,2,\ldots,p$, let 
			$$\alpha_{u_x^1u_y^2}^j=\bigcup_{\substack{k=1\\k\neq i}}^{p}\{\xi:f((v_i,u_x^1))+f((v_k,u_y^2))\equiv\xi\pmod{p},\text{ }0<\xi<p\}$$
			and as a result $\alpha_{u_x^1u_y^2}^j=\{1,2,\ldots,p-1\}$. By Property~\ref{prop2}, we have
			$$e_{f_p^*}(0)=e_{f_p^*}(1)=mp\left(\frac{p-1}{2}\right)$$
			and it follows that $|e_{f_p^*}(0)-e_{f_p^*}(1)|=0$. Therefore, $K_p\times G$ is an arithmetic cordial graph modulo $p$ under $\langle S,\zeta_p,+\rangle$.
		\end{proof}
		\section{Arithmetic Cordial Labeling modulo $p$ under $\left\langle S,\zeta_p,\cdot\right\rangle$}\label{sec4}
		\tab In this section, we assume that Properties~\ref{prop1}, \ref{prop2}, and \ref{prop3} hold for $\zeta_p$. Also, let $\zeta_p(a)=\frac{1+\chi_p(a)}{2}$ where $\chi_p:\{\xi\in \mathbb{Z}:\gcd(\xi,p)=1\}\to\{-1,1\}$ is a function. By Property~\ref{prop2}, suppose that $s_1,s_2,\ldots,s_{(p-1)/2}\in A_0$ and $r_1,r_2,\ldots,r_{(p-1)/2}\in A_1$.
		
		\begin{remark}\normalfont\label{rem5.1}
			Let $a$ and $b$ be integers relatively prime to $p$. Then
			\begin{equation}
				\chi_p(ab)=\begin{cases}
					1&\text{ if }\chi_p(a)=\chi_p(b)=1\text{ or }\chi_p(a)=\chi_p(b)=-1\\
					-1&\text{ if }\chi_p(a)=-\chi_p(b).
				\end{cases}\label{eqrem}
			\end{equation}
			Thus,
			$$\zeta_p(ab)=\begin{cases}
				0&\text{ if }a,b\in A_0\text{ or }a,b\in A_1\\
				1&\text{ if }a\in A_0\text{ and }b\in A_1,\text{ or }a\in A_1\text{ and }b\in A_0. 
			\end{cases}$$
		\end{remark}
		
		\begin{proof}
			Immediately follows Properties~\ref{prop2} and \ref{prop3}.
		\end{proof}
		
		\begin{theorem}\normalfont
			The join graph $\overline{K}_p+KP_{(p-1)/2,0,(p-1)/2}$ is an arithmetic cordial graph modulo $p$ under $\left\langle S,\zeta_p,\cdot\right\rangle$ where $S=\{1,2,\ldots,2p-1\}$, for all $p\geq 7$.
		\end{theorem}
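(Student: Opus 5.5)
The plan is to count edge labels directly with Remark~\ref{rem5.1}, not to appeal to Theorem~\ref{thmgen4.3}. Put $q=\frac{p-1}{2}$. Then $KP_{q,0,q}$ consists of two cycles $C_q^1,C_q^2$ joined by a single bridge edge, so it has order $p-1$ and size $2q+1=p$. The join therefore has order $2p-1$ and size $p+p(p-1)=p^2$, which is odd. Hence we need $e_{f_p^*}(1)\in\{\frac{p^2-1}{2},\frac{p^2+1}{2}\}$. The hypothesis $p\geq 7$ guarantees $q\geq 3$, so both cycles are genuine.

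In $S=\{1,\ldots,2p-1\}$ the only multiple of $p$ is $p$ itself, and every nonzero residue mod $p$ occurs exactly twice, as $x$ and $x+p$. By Property~\ref{prop1} and Remark~\ref{rem5.1}, an edge $ab$ gets label $1$ exactly when neither label is $\equiv 0$ and the residues of $f(a),f(b)$ lie in different classes $A_0,A_1$. So the whole problem reduces to deciding which residue class each vertex receives. I will always give the label $p$ to a vertex of $\overline{K}_p$; its $p-1$ join edges are then all labelled $0$.

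Suppose $KP_{q,0,q}$ receives $k$ labels with residues in $A_0$ and $p-1-k$ with residues in $A_1$. Then the other $p-1$ vertices of $\overline{K}_p$ receive $p-1-k$ labels from $A_0$ and $k$ from $A_1$. Each residue has two copies, so this is realisable for any $0\leq k\leq p-1$. Counting bichromatic join edges gives $(p-1-k)^2+k^2$ join edges labelled $1$. Write $k=q+d$; this count equals $\frac{(p-1)^2}{2}+2d^2$. If $a$ denotes the number of bichromatic edges inside $KP_{q,0,q}$, the requirement becomes $\frac{(p-1)^2}{2}+2d^2+a\in\{\frac{p^2-1}{2},\frac{p^2+1}{2}\}$.

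I split into cases according to $p \bmod 4$.
\begin{enumerate}
\item[(a)] If $p\equiv1\pmod 4$, then $q$ is even. Take $d=0$ and colour each cycle alternately $A_0,A_1$. This uses $q$ labels of each class, and I arrange that the two endpoints of the bridge get different classes. Then $a=p$, so $e_{f_p^*}(1)=\frac{(p-1)^2}{2}+p=\frac{p^2+1}{2}$.
\item[(b)] If $p\equiv3\pmod 4$, then $q$ is odd, so each cycle must contain at least one monochromatic edge and $a=p$ is impossible. Take $d=1$ and colour each cycle alternately except for one $A_0A_0$ edge. Each cycle then carries $\frac{q+1}{2}$ vertices from $A_0$ and $\frac{q-1}{2}$ from $A_1$, for a total of $k=q+1$ as required. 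Again the bridge joins vertices of different classes. This gives $a=p-2$, so $e_{f_p^*}(1)=\frac{(p-1)^2}{2}+2+p-2=\frac{p^2+1}{2}$.
\end{enumerate}

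The final step is to write $f$ explicitly, using the notation $s_i\in A_0$, $r_i\in A_1$ fixed at the start of this section, and to check bijectivity. In case (a), $KP$ takes $s_i+p$ and $r_i+p$, while $\overline{K}_p$ takes $p$ together with all $s_i,r_i$. In case (b), $KP$ takes all $s_i+p$ plus one extra $s_1$, and the $r_i+p$ except one. I expect the main obstacle to be exactly this residue-class bookkeeping: making sure the vertex-class counts on the two cycles are consistent with an alternating-with-one-defect pattern, that the bridge endpoints can be chosen bichromatic, and that the leftover labels fit $\overline{K}_p$. The edge count itself is routine once Remark~\ref{rem5.1} is applied.
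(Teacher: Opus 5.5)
Your edge-label rule is inverted, and this breaks both constructions. In this section $\zeta_p=\frac{1+\chi_p}{2}$, so $A_0=\{a:\chi_p(a)=-1\}$ and $A_1=\{a:\chi_p(a)=1\}$. Property~\ref{prop1} lets you pass to residues, and Property~\ref{prop3} gives $\chi_p(f(a)f(b))=\chi_p(f(a))\chi_p(f(b))$. So an edge whose endpoints carry residues from the \emph{same} class has $\chi_p=1$, hence $\zeta_p=1$ and label $1$. A bichromatic edge gets label $0$. For the Legendre symbol this is the familiar fact that a nonresidue times a nonresidue is a residue. This is exactly (\ref{eqrem}). The second display of Remark~\ref{rem5.1}, which you relied on, is reversed relative to (\ref{eqrem}) and to the definition of $\zeta_p$. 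The paper's own proof of this theorem uses the correct rule: the cycle edges with residues $r_ir_{i+1}$ get label $1$, and the bridge with residue $r_{(p-1)/2}s_1$ gets label $0$.

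Under the correct rule, $e_{f_p^*}(1)$ is the number of monochromatic edges not meeting the vertex labelled $p$. That number is $2k(p-1-k)+(p-a)=\frac{(p-1)^2}{2}-2d^2+(p-a)$. It equals $\frac{(p-1)^2}{2}$ in case (a) ($d=0$, $a=p$) and also in case (b) ($d=1$, $a=p-2$). Hence $e_{f_p^*}(0)-e_{f_p^*}(1)=p^2-(p-1)^2=2p-1$. For $p=13$ you get $72$ edges labelled $1$ against $97$ labelled $0$.

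Your reduction to $k$, $d$ and the internal edge count is still the right framework, and redoing it with the correct rule pins down the labelling. Let $m$ be the number of monochromatic edges inside $KP_{q,0,q}$. You need $\frac{(p-1)^2}{2}-2d^2+m\in\{\frac{p^2-1}{2},\frac{p^2+1}{2}\}$, that is, $m-2d^2\in\{p-1,p\}$.
\begin{itemize}
\item Since $m\leq p$, this forces $d=0$.
\item Then $m=p$ is impossible: the connected graph $KP_{q,0,q}$ would be monochromatic, so $k\in\{0,p-1\}$.
\item Hence $m=p-1$. The unique bichromatic edge must be the bridge, because deleting a cycle edge leaves $KP_{q,0,q}$ connected.
\end{itemize}
So each cycle is monochromatic and the two cycles receive opposite classes. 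This is precisely the paper's labelling: $f(u_i)=r_i+p$, $f(w_i)=s_i+p$, and $\overline{K}_p$ receives all $r_i$, all $s_i$ and $p$. It gives $e_{f_p^*}(1)=\frac{p^2-1}{2}$ and $e_{f_p^*}(0)=\frac{p^2+1}{2}$. No case split on $p$ modulo $4$ is needed; only $q\geq 3$, i.e.\ $p\geq 7$, is required for the cycles to exist.
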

		
		\begin{proof}
			Let $C_{(p-1)/2}^1$ and $C_{(p-1)/2}^2$ be cycles, and $P_2$ be a path graph of $KP_{(p-1)/2,0,(p-1)/2}$. Suppose that 
			\begin{align*}
				V(\overline{K}_p)&=\{v_1,v_2,\ldots,v_p\},\\
				V(C_{(p-1)/2}^1)&=\{u_1,u_2,\ldots,u_{(p-1)/2}\},\\
				V(C_{(p-1)/2}^2)&=\{w_1,w_2,\ldots,w_{(p-1)/2}\},\\
				V(P_2)&=\{u_{(p-1)/2},w_1\}.
			\end{align*}
			Define the function $f:V(\overline{K}_p+KP_{(p-1)/2,0,(p-1)/2})\to S$ by
			\begin{align*}
				f(v_i)&=\begin{cases}
					r_i&\text{ for }i=1,2,\ldots,\frac{p-1}{2}\\
					s_{i-(p-1)/2}&\text{ for }i=\frac{p+1}{2},\frac{p+3}{2},\ldots,p-1\\
					p&\text{ for }i=p,
				\end{cases}\\
				f(u_i)&=r_i+p\text{ for }i=1,2,\ldots,\frac{p-1}{2}\\
				f(w_i)&=s_i+p\text{ for }i=1,2,\ldots,\frac{p-1}{2},
			\end{align*}
			where $S=\{1,2,\ldots,2p-1\}$. Then $f$ is a bijective function.
			
			\tab For the edges of $C_{(p-1)/2}^1$, $C_{(p-1)/2}^2$, and $P_2$,
			\begin{align*}
				f(u_i)f(u_{i+1})&\equiv r_ir_{i+1}\pmod{p}\text{ for }i=1,2,\ldots,\frac{p-3}{2},\\
				f(u_1)f(u_{(p-1)/2})&\equiv r_1r_{(p-1)/2}\pmod{p},\\
				f(w_i)f(w_{i+1})&\equiv s_is_{i+1}\pmod{p}\text{ for }i=1,2,\ldots,\frac{p-3}{2},\\
				f(w_1)f(w_{(p-1)/2})&\equiv s_1s_{(p-1)/2}\pmod{p},\\
				f(u_{(p-1)/2})f(w_1)&\equiv r_{(p-1)/2}s_1\pmod{p}.
			\end{align*}
			By Remark~\ref{rem5.1},
			\begin{align*}
				f_p^*(u_iu_{i+1})&=1\text{ for }i=1,2,\ldots,\frac{p-3}{2},\\
				f_p^*(u_1u_{(p-1)/2})&=1,\\
				f_p^*(w_iw_{i+1})&=1\text{ for }i=1,2,\ldots,\frac{p-3}{2},\\
				f_p^*(w_1w_{(p-1)/2})&=1,\\
				f_p^*(u_{(p-1)/2}w_1)&=0.
			\end{align*}
			
			\tab For the edges of the forms $v_iu_j$ and $v_iw_j$, for each $j=1,2,\ldots,\frac{p-1}{2}$,
			\begin{align*}
				f(v_i)f(u_j)&\equiv \begin{cases}
					r_ir_j\pmod{p}&\text{ for }i=1,2,\ldots,\frac{p-1}{2}\\
					s_{i-(p-1)/2}r_j\pmod{p}&\text{ for }i=\frac{p+1}{2},\frac{p+3}{2},\ldots,p-1\\
					0\pmod{p}&\text{ for }i=p,
				\end{cases}\\
				f(v_i)f(w_j)&\equiv \begin{cases}
					r_is_j\pmod{p}&\text{ for }i=1,2,\ldots,\frac{p-1}{2}\\
					s_{i-(p-1)/2}s_j\pmod{p}&\text{ for }i=\frac{p+1}{2},\frac{p+3}{2},\ldots,p-1\\
					0\pmod{p}&\text{ for }i=p.
				\end{cases}
			\end{align*}
			By Remark~\ref{rem5.1}, for each $j=1,2,\ldots,\frac{p-1}{2}$,
			\begin{align*}
				f_p^*(v_iu_j)&= \begin{cases}
					1&\text{ for }i=1,2,\ldots,\frac{p-1}{2}\\
					0&\text{ for }i=\frac{p+1}{2},\frac{p+3}{2},\ldots,p,
				\end{cases}\\
				f_p^*(v_iw_j)&= \begin{cases}
					0&\text{ for }i=1,2,\ldots,\frac{p-1}{2},p\\
					1&\text{ for }i=\frac{p+1}{2},\frac{p+3}{2},\ldots,p-1.
				\end{cases}
			\end{align*}
			
			\tab Therefore,
			\begin{align*}
				e_{f_p^*}(0)&=1+2\left(\frac{p+1}{2}\right)\left(\frac{p-1}{2}\right)=1+(p-1)\left(\frac{p+1}{2}\right),\text{ and }\\
				e_{f_p^*}(1)&=p-1+2\left(\frac{p-1}{2}\right)^2=(p-1)\left(\frac{p+1}{2}\right).
			\end{align*}
			Clearly, $|e_{f_p^*}(0)-e_{f_p^*}(1)|=1$ and so $\overline{K}_p+KP_{(p-1)/2,0,(p-1)/2}$ is an arithmetic cordial graph modulo $p$ under $\left\langle S,\zeta_p,\cdot\right\rangle$.
		\end{proof}

		\begin{theorem}\normalfont\label{thmjoin}
			Let $G_1$ and $G_2$ be graphs of the same order $\frac{p-1}{2}$ and let $f:V(G_1+G_2)\to S$ be a bijective function, where $S=\{1,2,\ldots,p-1\}$. For each $i=1,2$ and each $j=-1,1$, define
			$$\Omega_i^j=\{ab\in E(G_i):\chi_p(f(a)f(b))=j\}$$
			and suppose that
			$$B=\{a\in V(G_1):\chi_p(f(a))=1\}.$$
			In addition, let $\epsilon\in \{-1,0,1\}$. If 
			\begin{equation}
				|\Omega_1^1|+|\Omega_2^1|=|\Omega_1^{-1}|+|\Omega_2^{-1}|+\left(2|B|-\frac{p-1}{2}\right)^2+\epsilon,\label{eqjoin}
			\end{equation}
			then the join graph $G_1+G_2$ is an arithmetic cordial graph modulo $p$ under $\left\langle S,\zeta_p,\cdot\right\rangle$.
		\end{theorem}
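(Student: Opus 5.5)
The plan is to count the edges of $G_1+G_2$ labeled $1$ and $0$ by splitting $E(G_1+G_2)$ into three parts: $E(G_1)$, $E(G_2)$, and the join edges $\{ab: a\in V(G_1),\ b\in V(G_2)\}$. Since $S=\{1,2,\ldots,p-1\}$, every vertex label is relatively prime to $p$. Hence every product $f(a)f(b)$ is relatively prime to $p$, so the gcd clause in the definition of $f_p^*$ never applies. Each edge $ab$ is then labeled by $\zeta_p(f(a)f(b))=\frac{1+\chi_p(f(a)f(b))}{2}$. So $f_p^*(ab)=1$ exactly when $\chi_p(f(a)f(b))=1$. Consequently the edges of $G_1$ and $G_2$ contribute $|\Omega_1^1|+|\Omega_2^1|$ edges labeled $1$ and $|\Omega_1^{-1}|+|\Omega_2^{-1}|$ edges labeled $0$.

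For the join edges I would use Property~\ref{prop3}: $\chi_p(f(a)f(b))=\chi_p(f(a))\chi_p(f(b))$, which is the content of Remark~\ref{rem5.1}. Let $k=|B|$, so $V(G_1)$ has $k$ vertices with $\chi_p=1$ and $\frac{p-1}{2}-k$ vertices with $\chi_p=-1$. The key step is counting these classes in $V(G_2)$. Since $f$ is a bijection onto $\{1,\ldots,p-1\}$, Property~\ref{prop2} says exactly $\frac{p-1}{2}$ labels in $S$ have $\chi_p=1$. Therefore $V(G_2)$ contains $\frac{p-1}{2}-k$ vertices with $\chi_p=1$ and $k$ vertices with $\chi_p=-1$.

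A join edge is labeled $1$ exactly when its endpoints have equal $\chi_p$ values. Writing $h=\frac{p-1}{2}$, the join edges labeled $1$ number $k(h-k)+(h-k)k=2k(h-k)$. The join edges labeled $0$ number $k^2+(h-k)^2$. Their difference is
\[
k^2+(h-k)^2-2k(h-k)=(2k-h)^2=\left(2|B|-\tfrac{p-1}{2}\right)^2 .
\]

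Combining the three parts gives
\[
e_{f_p^*}(1)-e_{f_p^*}(0)=|\Omega_1^1|+|\Omega_2^1|-|\Omega_1^{-1}|-|\Omega_2^{-1}|-\left(2|B|-\tfrac{p-1}{2}\right)^2 .
\]
By (\ref{eqjoin}) this equals $\epsilon\in\{-1,0,1\}$, so $|e_{f_p^*}(0)-e_{f_p^*}(1)|\leq 1$, which proves the claim.

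The only delicate point is using the bijectivity of $f$ together with Property~\ref{prop2} to pin down the $\chi_p$-distribution on $V(G_2)$. Everything else is routine bookkeeping.
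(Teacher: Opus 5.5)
Your proof is correct and follows the paper's own argument: the same three-way split of $E(G_1+G_2)$, and the same use of bijectivity with Property~\ref{prop2} to get $\frac{p-1}{2}-k$ vertices of $G_2$ with $\chi_p=1$ and $k$ with $\chi_p=-1$. It also has the same join-edge counts, $2k(\frac{p-1}{2}-k)$ labeled $1$ and $k^2+(\frac{p-1}{2}-k)^2$ labeled $0$. Writing that difference as the perfect square $(2k-\frac{p-1}{2})^2$ is a cleaner finish than the paper's term-by-term expansion, and arguing directly from the multiplicativity of $\chi_p$ (Property~\ref{prop3}) avoids the $\zeta_p$ case display in Remark~\ref{rem5.1}, whose two cases are interchanged relative to $\zeta_p=\frac{1+\chi_p}{2}$.
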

		
		\begin{proof}
			Note that $S=A_0\cup A_1$. From the hypothesis, it is clear that $|\Omega_i^{-1}|$ edges of $G_i$ are labeled $0$ and
			$|\Omega_i^{1}|$ edges are labeled $1$ under $f_p^*$, for $i=1,2$. Let $k=|B|$.
			Clearly, $k$ vertices of $G_1$ have label $r_i$ for some $i$. It means that the remaining vertices have label $s_i$, and there are
			$\frac{p-1}{2}-k$ such vertices. Consequently, $k$ vertices of $G_2$ have label $s_i$ and
			$\frac{p-1}{2}-k$ vertices have label $r_i$ for some $i$. By Remark~\ref{rem5.1}, there are $2k\left(\frac{p-1}{2}-k\right)$ edges labeled $1$
			and $k^2+\left(\frac{p-1}{2}-k\right)^2$ edges labeled $0$ of the form $ab$, where $a\in V(G_1)$ and $b\in V(G_2)$, under $f_p^*$. 
			
			\tab Consequently,
			\begin{align*}
				e_{f_p^*}(1) &= 2k\left(\frac{p-1}{2}-k\right)+|\Omega_{1}^{1}|+|\Omega_{2}^{1}|
				\text{ and }\\
				e_{f_p^*}(0) &= k^2+\left(\frac{p-1}{2}-k\right)^2+|\Omega_{1}^{-1}|+|\Omega_{2}^{-1}|
			\end{align*}
			Given that $$|\Omega_1^1|+|\Omega_2^1| = |\Omega_1^{-1}| + |\Omega_2^{-1}|+\left(2k-\frac{p-1}{2}\right)^2 +\epsilon$$ 
			where $k = |B|$ and $\epsilon\in\{-1,0,1\}$, we have  
			\begin{align*}
				e_{f_p^*}(0)-e_{f_p^*}(1) & = 
				k^2+\left(\frac{p-1}{2}-k\right)^2+|\Omega_1^{-1}|+|\Omega_2^{-1}|\\
				& \hspace{0.5 cm} - \left[ 2k\left(\frac{p-1}{2}-k\right)+|\Omega_1^{-1}|+|\Omega_2^{-1}|+\left(2k-\frac{p-1}{2}\right)^2+\epsilon\right]\\
				& =   k^2+\left(\frac{p-1}{2}\right)^2-2\left[k\left(\frac{p-1}{2}\right)\right]+k^2-2k\left(\frac{p-1}{2}\right)+2k^2\\
				& \hspace{0.5 cm} - (2k)^2+2\left[2k\left(\frac{p-1}{2}\right)\right]-\left(\frac{p-1}{2}\right)^2+\epsilon\\
				& =4k^2-k\left(\frac{p-1}{2}\right)-k\left(\frac{p-1}{2}\right)-4k^2+2k\left(\frac{p-1}{2}\right)+\epsilon\\
				&=\epsilon.
			\end{align*}
			Since $\epsilon\in\{-1,0,1\}$, it follows that $|e_{f_p^*}(0)-e_{f_p^*}(1)|\leq1.$ Accordingly, $G_1+G_2$ is an arithmetic cordial graph modulo $p$ under $\left\langle S,\zeta_p,\cdot\right\rangle$.
		\end{proof}
		
		\tab An an example of Theorem~\ref{thmjoin}, consider the following result.
		
		\begin{theorem}\normalfont
			Assume that $\frac{p-1}{2}\equiv 0\pmod{4}$. The join graph $P_{(p-1)/2}+C_{(p-1)/2}$ is an arithmetic cordial graph modulo $p$ under $\left\langle S,\zeta_p,\cdot\right\rangle$, where $S=\{1,2,\ldots,p-1\}$ and $p\geq 17$.
		\end{theorem}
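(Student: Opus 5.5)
The plan is to apply Theorem~\ref{thmjoin} with $G_1=P_{(p-1)/2}$ and $G_2=C_{(p-1)/2}$, writing $q=\frac{p-1}{2}$. Then $q\equiv 0\pmod 4$ and $q\geq 8$ because $p\geq 17$. Since $S=\{1,2,\ldots,p-1\}=A_0\cup A_1$, every label is coprime to $p$. By Remark~\ref{rem5.1}, an edge $ab$ has $\chi_p(f(a)f(b))=1$ exactly when $f(a)$ and $f(b)$ lie in the same class $A_0$ or $A_1$. So $|\Omega_i^{1}|$ counts the ``same-class'' edges of $G_i$ and $|\Omega_i^{-1}|$ counts the ``cross'' edges.

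First I fix $|B|=k=q/2$, so that $\left(2|B|-\frac{p-1}{2}\right)^2=0$. Condition (\ref{eqjoin}) then only asks that the total numbers of same-class and cross edges inside $G_1$ and $G_2$ differ by at most one. The two graphs have $(q-1)+q=2q-1$ internal edges together. I aim for exactly $q$ same-class edges and $q-1$ cross edges, which gives $\epsilon=1$. With $k=q/2$, $G_1$ receives $q/2$ labels from $A_1$ (the $r$'s) and $q/2$ from $A_0$ (the $s$'s). $G_2$ automatically receives the remaining $q/2$ of each.

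The labeling is as follows, with $V(P_q)=\{u_1,\ldots,u_q\}$ and $V(C_q)=\{w_1,\ldots,w_q\}$ in path and cycle order.
\begin{itemize}
\item On the cycle, put all of $s_{q/4+1},\ldots,s_{q/2}$ consecutively and then all of $r_{q/4+1},\ldots,r_{q/2}$ (any split of the indices not used on the path works). Two blocks on a cycle give exactly $2$ cross edges and $q-2$ same-class edges.
\item On the path, set $u_1=r_1$, $u_2=r_2$, $u_3=s_1$, $u_4=s_2$. Then alternate $r,s,r,s,\ldots$ on $u_5,\ldots,u_q$. This uses $2+\frac{q-4}{2}=\frac q2$ labels from each class; $q-4$ is even because $q\equiv 0 \pmod 4$.
\item The path edges $u_1u_2$ and $u_3u_4$ are same-class. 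The edge $u_2u_3$ is cross, and the remaining $q-4$ edges from $u_4$ onward are all cross. This gives $2$ same-class and $q-3$ cross edges.
\end{itemize}
The index bookkeeping must be consistent: the path uses $r_1,\ldots,r_{q/2}$ and $s_1,\ldots,s_{q/2}$ only partly, and the cycle uses the rest. I will write the path labels explicitly, e.g.\ $f(u_{4+2t-1})=r_{2+t}$ and $f(u_{4+2t})=s_{2+t}$ for $t=1,\ldots,\frac{q-4}{2}$, and assign the leftover $r$'s and $s$'s to the cycle in two blocks. One checks that $f$ is a bijection onto $S$ and that $|B|=q/2$.

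Totals: $|\Omega_1^1|+|\Omega_2^1|=2+(q-2)=q$ and $|\Omega_1^{-1}|+|\Omega_2^{-1}|=(q-3)+2=q-1$. Hence (\ref{eqjoin}) holds with $\epsilon=1$, and Theorem~\ref{thmjoin} gives the result. The only delicate step is this index bookkeeping, not the counting. The hypothesis $p\geq 17$ guarantees $q\geq 8$, so the four-vertex prefix and the alternating tail both make sense.
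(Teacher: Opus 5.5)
Your route is the paper's: apply Theorem~\ref{thmjoin} with exactly $q/2$ labels from $A_1$ on the path, so that $\left(2|B|-\frac{p-1}{2}\right)^2=0$. Then arrange the internal edges to give $q$ same-class and $q-1$ cross edges, i.e.\ $\epsilon=1$. Only the explicit pattern differs. The paper uses period-four blocks, $r,r,s,s$ along $P_{4k}$ and $s,s,r,r$ around $C_{4k}$, which gives $2k$ same-class and $2k-1$ cross edges on the path and $2k$ and $2k$ on the cycle. You use a four-vertex prefix plus an alternating tail on the path ($2$ and $q-3$) and two blocks on the cycle ($q-2$ and $2$). Your edge counts are correct.

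There is, however, a slip in exactly the bookkeeping you flagged as delicate. By your own explicit formula, the path uses all of $r_1,\ldots,r_{q/2}$ and $s_1,\ldots,s_{q/2}$, not ``only partly''. Since $|A_0|=|A_1|=\frac{p-1}{2}=q$, the labels left for the cycle are $s_{q/2+1},\ldots,s_q$ and $r_{q/2+1},\ldots,r_q$.

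The ranges you wrote for the cycle, $s_{q/4+1},\ldots,s_{q/2}$ and $r_{q/4+1},\ldots,r_{q/2}$, have two problems. They contain only $q/4$ elements each, and they repeat labels already placed on the path. So, as stated, $f$ is neither injective nor onto $S$. To fix it, label the cycle with one block of the $q/2$ consecutive labels $s_{q/2+1},\ldots,s_q$ followed by one block $r_{q/2+1},\ldots,r_q$. The rest of your argument then goes through unchanged.
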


		\begin{proof}
			Since $\frac{p-1}{2}\equiv 0\pmod{4}$, we have $\frac{p-1}{2}=4k$ for some integer $k$. We will use Theorem~\ref{thmjoin} with $G_1=P_{(p-1)/2}$ and $G_2=C_{(p-1)/2}$. Now, assume that $V(P_{4k})=\{v_1,v_2,\ldots,v_{4k}\}$ and $V(C_{4k})=\{u_1,u_2,\ldots,u_{4k}\}$. Define a function $f:V(P_{4k}+C_{4k})\to \{1,2,\ldots,8k\}$ as
			\begin{align*}
				f(v_i)&=\begin{cases}
					r_i&\text{ for }i\equiv 1\text{ or }2\pmod{4}\\
					s_i&\text{ for }i\equiv 0\text{ or }3\pmod{4},
				\end{cases}\\
				f(u_i)&=\begin{cases}
					s_i&\text{ for }i\equiv 1\text{ or }2\pmod{4}\\
					r_i&\text{ for }i\equiv 0\text{ or }3\pmod{4},
				\end{cases}
			\end{align*}
			for $i=1,2,\ldots,4k$. So, $f$ is a bijective function and $|B|=k$. 
			
			\tab For the edges of $P_{4k}$,
			$$f(v_i)f(v_{i+1})=\begin{cases}
				r_ir_{i+1}&\text{ for }i\equiv 1\pmod{4}\\
				r_is_{i+1}&\text{ for }i\equiv 2\pmod{4}\\
				s_is_{i+1}&\text{ for }i\equiv 3\pmod{4}\\
				s_ir_{i+1}&\text{ for }i\equiv 0\pmod{4}
			\end{cases}$$ 
			for $i=1,2,\ldots,4k-1$. By Remark~\ref{rem5.1}, equation (\ref{eqrem}),
			$$\Omega_{1}^{1}=\bigcup_{\substack{i=1\\i\equiv 1\text{ or }3\pmod{4}}}^{4k-1}\{v_iv_{i+1}\} \text{ and }\Omega_{1}^{-1}=\bigcup_{\substack{i=1\\i\equiv 0\text{ or }2\pmod{4}}}^{4k-1}\{v_iv_{i+1}\}.$$
			Hence, $|\Omega_1^1|=2k$ and $|\Omega_1^{-1}|=2k-1$.
			
			\tab For the edges of $C_{4k}$, 
			$$f(u_i)f(u_{i+1})=\begin{cases}
				s_is_{i+1}&\text{ for }i\equiv 1\pmod{4}\\
				s_ir_{i+1}&\text{ for }i\equiv 2\pmod{4}\\
				r_ir_{i+1}&\text{ for }i\equiv 3\pmod{4}\\
				r_is_{i+1}&\text{ for }i\equiv 0\pmod{4}
			\end{cases}$$ 
			for $i=1,2,\ldots,4k-1$, and
			$$f(u_1)f(u_{4k})\equiv s_1r_{4k}.$$
			Using Remark~\ref{rem5.1}, equation~(\ref{eqrem}), we have
			$$\Omega_{2}^{1}=\bigcup_{\substack{i=1\\i\equiv 1\text{ or }3\pmod{4}}}^{4k-1}\{v_iv_{i+1}\} \text{ and }\Omega_{2}^{-1}=\left[\bigcup_{\substack{i=1\\i\equiv 0\text{ or }2\pmod{4}}}^{4k-1}\{v_iv_{i+1}\}\right]\cup \{u_1u_{4k}\}.$$
			Thus, $|\Omega_2^1|=|\Omega_2^{-1}|=2k$.
			
			\tab Applying equation~\ref{eqjoin} with $\epsilon=1$, we have
			\begin{align*}
				2k+2k&=2k-1+2k+\left(2k-2k\right)^2+1\\
				4k&=4k.
			\end{align*}
			Therefore, $P_{(p-1)/2}+C_{(p-1)/2}$ is an arithmetic cordial graph modulo $p$ under $\left\langle S,\zeta_p,\cdot\right\rangle$, where $S=\{1,2,\ldots,p-1\}$.
		\end{proof}
		
		\begin{theorem}\normalfont
			Let $G$ be a connected graph of order $\frac{p-1}{2}$ and size $\frac{p-1}{2}+\epsilon$, where $\epsilon\in \{-1,0,1\}$. Then the corona graph $G\circ K_1$ is an arithmetic cordial graph modulo $p$ under $\left\langle S,\zeta_p,\cdot\right\rangle$, where $S=\{1,2,\ldots,p-1\}$, for all $p\geq 3$.
		\end{theorem}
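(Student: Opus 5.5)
The plan is to give an explicit labeling that separates the two classes $A_0$ and $A_1$ between the vertices of $G$ and their pendant vertices, and then count edges using Remark~\ref{rem5.1}. Let $V(G)=\{v_1,v_2,\ldots,v_{(p-1)/2}\}$, and let $u_i$ be the pendant vertex of $G\circ K_1$ attached to $v_i$. Then $G\circ K_1$ has order $p-1$ and size
$$\left(\frac{p-1}{2}+\epsilon\right)+\frac{p-1}{2}.$$
Since $S=\{1,2,\ldots,p-1\}=A_0\cup A_1$ and $|A_0|=|A_1|=\frac{p-1}{2}$ by Property~\ref{prop2}, I will define
$$f(v_i)=r_i\quad\text{and}\quad f(u_i)=s_i\quad\text{for } i=1,2,\ldots,\frac{p-1}{2}.$$
This $f$ is clearly a bijection onto $S$.

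Next I check the gcd condition. Every label lies in $\{1,\ldots,p-1\}$ and $p$ is prime, so every product $f(a)f(b)$ is relatively prime to $p$. Hence the ``$\gcd\neq 1$'' clause in the definition of $f_p^*$ never applies, and each edge label is determined entirely by $\zeta_p(f(a)f(b))$.

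The counting then splits into two types of edges.
\begin{enumerate}
\item[i.] For an edge $v_iv_j\in E(G)$, both endpoint labels $r_i,r_j$ lie in $A_1$. By Remark~\ref{rem5.1} (that is, Property~\ref{prop3}, so $\chi_p(r_ir_j)=\chi_p(r_i)\chi_p(r_j)$), all $\frac{p-1}{2}+\epsilon$ edges of $G$ get one common label $\lambda\in\{0,1\}$.
\item[ii.] For a pendant edge $v_iu_i$, the labels are $r_i\in A_1$ and $s_i\in A_0$, which lie in opposite classes. By the same remark, all $\frac{p-1}{2}$ pendant edges get the label $1-\lambda$.
\end{enumerate}
The only point needing care is that the argument must not depend on which value $\lambda$ actually takes. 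It does not: whatever $\lambda$ is, the ``same class'' edges and the ``mixed class'' edges receive opposite labels, because $\chi_p(ab)=\chi_p(a)\chi_p(b)$ flips sign exactly when the classes differ.

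Therefore
$$\{e_{f_p^*}(0),\,e_{f_p^*}(1)\}=\left\{\frac{p-1}{2}+\epsilon,\ \frac{p-1}{2}\right\},$$
so $|e_{f_p^*}(0)-e_{f_p^*}(1)|=|\epsilon|\leq 1$. Hence $G\circ K_1$ is an arithmetic cordial graph modulo $p$ under $\langle S,\zeta_p,\cdot\rangle$. I do not expect any real obstacle; the construction is direct, and no divisibility condition on $p$ is needed since the case $p=3$ (where $G=K_1$) is handled identically.
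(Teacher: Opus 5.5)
Your proof is correct and follows the paper's proof: the same labeling $f(v_i)=r_i$, $f(u_i)=s_i$, with the same-class edges of $G$ and the mixed-class pendant edges receiving opposite labels via Property~\ref{prop3}, which gives counts $\frac{p-1}{2}+\epsilon$ and $\frac{p-1}{2}$. Your $\lambda$-agnostic phrasing and your explicit gcd check are harmless additions; in fact $\lambda=1$, as the paper's proof uses, even though Remark~\ref{rem5.1} as printed states its case split the other way round.
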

		
		\begin{proof}
			Let $V(G)=\{v_1,v_2,\ldots,v_{(p-1)/2}\}$. Suppose that $K_1^i$ is the $i$th copy of $K_1$ with $V(K_1^i)=\{u_i\}$ such that $u_i$ is adjacent to $v_i$, for $i=1,2,\ldots,\frac{p-1}{2}$. Assume that $f:V(G\circ K_1)\to S$ is a function defined by
			$$f(v_i)=r_i\text{ and }f(u_i)=s_i$$
			for $i=1,2,\ldots,\frac{p-1}{2}$, where $S=\{1,2,\ldots,p-1\}$. 
			
			\tab For the edges of $G$, if $v_iv_j\in E(G)$, for some $i,j\in \{1,2,\ldots,\frac{p-1}{2}\}$, then 
			$$f(v_i)f(v_j)=r_ir_j.$$
			By Remark~\ref{rem5.1}, $f_p^*(e)=1$ for every $e\in E(G)$.
			
			\tab For the rest of the edges,
			$$f(u_i)f(v_i)=s_ir_i$$
			for $i=1,2,\ldots,\frac{p-1}{2}$. Thus, by Remark~\ref{rem5.1}, $f_p^*(u_iv_i)=0$ for $i=1,2,\ldots,\frac{p-1}{2}$. 
			
			\tab Therefore,
			$$e_{f_p^*}(0)=\frac{p-1}{2}\text{ and }e_{f_p^*}(1)=\frac{p-1}{2}+\epsilon.$$
			Since $\epsilon\in \{-1,0,1\}$, it follows that $|e_{f_p^*}(0)-e_{f_p^*}(1)|\leq 1$. Hence, $G\circ K_1$ is an arithmetic cordial graph modulo $p$ under $\left\langle S,\zeta_p,\cdot\right\rangle$.
		\end{proof}
		
		\begin{theorem}\normalfont
			Let $G$ be a connected graph of order $p-1$ and size $m(p-1)+\epsilon$, where $\epsilon\in \{-1,0,1\}$. In addition, suppose that $H$ is a graph of order $p-1$ and size $p-1+m$. Then the corona graph $G\circ H$ is an arithmetic cordial graph modulo $p$ under $\left\langle S,\zeta_p,\cdot\right\rangle$, where $S=\{1,2,\ldots,p(p-1)\}$, for all $m\geq 1$ and $p\geq 3$.
		\end{theorem}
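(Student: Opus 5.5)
The plan is to make \emph{every} edge that does not lie in a copy of $H$ receive label $0$ for a gcd reason, and to make \emph{every} edge inside a copy of $H$ receive label $1$. The counts then balance automatically.

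Let $V(G)=\{v_1,\ldots,v_{p-1}\}$, and let $H^i$ be the $i$th copy of $H$, with every vertex of $H^i$ adjacent to $v_i$. The order of $G\circ H$ is $(p-1)+(p-1)^2=p(p-1)=|S|$.

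The set $S=\{1,2,\ldots,p(p-1)\}$ contains exactly $p-1$ multiples of $p$, namely $p,2p,\ldots,(p-1)p$. I would put $f(v_i)=ip$ for $i=1,\ldots,p-1$. Every edge of $G$ and every edge $v_iu$ with $u\in V(H^i)$ then has a product divisible by $p$. So these edges get label $0$ under $f_p^*$, since $\gcd(f(a)f(b),p)\neq 1$. This gives
$$m(p-1)+\epsilon+(p-1)^2$$
edges labeled $0$, and it uses no property of $G$ other than its order and size.

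The remaining $(p-1)^2$ labels are the non-multiples of $p$ in $S$. By Property~\ref{prop1}, $\zeta_p$ and $\chi_p$ depend only on the residue mod $p$. Each block $\{(k-1)p+1,\ldots,kp-1\}$ hits every nonzero residue exactly once. By Property~\ref{prop2}, exactly $\frac{p-1}{2}$ residues lie in $A_0$ and $\frac{p-1}{2}$ lie in $A_1$. Hence exactly $\frac{(p-1)^2}{2}$ of the remaining labels have $\chi_p=-1$ and $\frac{(p-1)^2}{2}$ have $\chi_p=1$. Split them into $\frac{p-1}{2}$ groups of size $p-1$ all with $\chi_p=1$, and $\frac{p-1}{2}$ groups of size $p-1$ all with $\chi_p=-1$. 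Assign each group bijectively to the vertex set of one copy $H^i$. This makes $f$ a bijection onto $S$.

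For an edge $ab$ of a copy $H^i$, both endpoints have the same $\chi_p$-value. By Property~\ref{prop3}, $\chi_p(f(a)f(b))=\chi_p(f(a))\chi_p(f(b))=1$, so $\zeta_p(f(a)f(b))=1$ and the edge gets label $1$. This is the step needing care: the multiplicativity of Property~\ref{prop3} is what gives "same class $\Rightarrow$ label $1$". That is the correct reading of Remark~\ref{rem5.1}, consistent with its use in the proof of the $G\circ K_1$ result, where $r_ir_j$ gives label $1$. Consequently all
$$(p-1)(p-1+m)=(p-1)^2+m(p-1)$$
edges of the copies of $H$ are labeled $1$.

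Therefore
$$e_{f_p^*}(0)-e_{f_p^*}(1)=\bigl[(p-1)^2+m(p-1)+\epsilon\bigr]-\bigl[(p-1)^2+m(p-1)\bigr]=\epsilon\in\{-1,0,1\},$$
which proves the theorem. The main point to check is the counting that the non-multiples of $p$ split evenly into the two $\chi_p$-classes, with each class large enough to fill $\frac{p-1}{2}$ whole copies. This follows from Properties~\ref{prop1} and \ref{prop2} applied blockwise.
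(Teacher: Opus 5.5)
Your proof is correct and is essentially the paper's argument. The paper also sets $f(v_i)=ip$, so every edge of $G$ and every edge $v_iu$ with $u\in V(H^i)$ gets label $0$ by the gcd condition, and every edge inside a copy of $H$ gets label $1$ via the multiplicativity in Property~\ref{prop3}, giving the same difference $e_{f_p^*}(0)-e_{f_p^*}(1)=\epsilon$.

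The only difference is that the paper uses one specific monochromatic grouping: it gives $H^i$ an entire residue class via $f(u_j^i)=r_i+(j-1)p$ (resp.\ $s_{i-(p-1)/2}+(j-1)p$), so each edge product is $\equiv r_i^2$ or $s_{i-(p-1)/2}^2 \pmod{p}$; this is a special case of your assignment and avoids your blockwise counting.
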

		
		\begin{proof}
			Let $V(G)=\{v_1,v_2,\ldots,v_{p-1}\}$. Suppose that $H^i$ is the $i$th copy of $H$ with $V(H^i)=\{u_1^i,u_2^i,\ldots,u_{p-1}^i\}$ where all vertices of $H^i$ is adjacent to $v_i$ for $i=1,2,\ldots,p-1$. Define a function $f:V(G\circ H)\to S$ as
			\begin{align*}
				f(u_j^i)&=\begin{cases}
					r_i+(j-1)p&\text{ for }i=1,2,\ldots,\frac{p-1}{2}\\
					s_{i-(p-1)/2}+(j-1)p&\text{ for }i=\frac{p+1}{2},\frac{p+3}{2},\ldots,p-1,
				\end{cases}
			\end{align*}
			for $j=1,2,\ldots,p-1$, and 
			$$f(v_i)=p+(i-1)p\text{ for }i=1,2,\ldots,p-1,$$
			where $S=\{1,2,\ldots,p-1\}$. Thus, $f$ is a bijective function.
			
			\tab For the edges of $G$, if $v_iv_j\in E(G)$, for some $i,j\in\{1,2,\ldots,p-1\}$, then
			$$f(v_i)f(v_j)\equiv 0\pmod{p}.$$
			Hence, $f_p^*(e)=0$ for all $e\in E(G)$.
			
			\tab For the edges of $H^i$, if $u_a^iu_b^i\in E(H^i)$, for some $a,b\in \{1,2,\ldots,p-1\}$, we have
			$$f(u_a^i)f(u_b^i)\equiv \begin{cases}
				r_i^2\pmod{p}&\text{ for }i=1,2,\ldots,\frac{p-1}{2}\\
				s_{i-(p-1)/2}^2\pmod{p}&\text{ for }i=\frac{p+1}{2},\frac{p+3}{2},\ldots,p-1.
			\end{cases}$$
			By Remark~\ref{rem5.1}, $f_p^*(e)=1$ for all $e\in E(H^i)$, for $i=1,2,\ldots,p-1$.
			
			\tab For the remaining edges, for each $i,j=1,2,\ldots,p-1$,
			$$f(v_i)f(u_j^i)\equiv 0\pmod{p}$$
			and it follows that
			$$f_p^*(v_iu_j^i)=0.$$
			
			\tab Consequently,
			$$e_{f_p^*}(0)=m(p-1)+\epsilon+(p-1)^2\text{ and }e_{f_p^*}(1)=(p-1)(p-1+m)=(p-1)^2+m(p-1).$$
			Because $\epsilon\in\{-1,0,1\}$, we have $|e_{f_p^*}(0)-e_{f_p^*}(1)|\leq 1$. Therefore, $G\circ H$ is an arithmetic cordial graph modulo $p$ under $\left\langle S,\zeta_p,\cdot\right\rangle$.
		\end{proof}
		
		\section*{Acknowledgement}
		\tab The authors gratefully acknowledge the support of the Department of Science and Technology - Science Education Institute (DOST-SEI) under the STRAND Scholarship Program.
		
		\section*{Conflict of Interest}
		\tab The authors declare that there is no conflict of interest.

\end{document}